\newtheorem{thm}{Theorem}
\newtheorem{lem}{Lemma}[section]
\newtheorem{cor}{Corollary}[section]
\newtheorem{prop}[lem]{Proposition}
\theoremstyle{definition}
\theoremstyle{remark}
\newtheorem{rem}{Remark}[section]
\numberwithin{equation}{section}
\newcommand{\norm}[1]{\left\Vert#1\right\Vert}
\newcommand{\FF}[1]{\mathbb{F}_{\!#1}}
\newcommand{\G}{\Gamma}
\newcommand{\calF}{\mathcal{F}}
\newcommand{\calH}{\mathcal{H}}
\newcommand{\calO}{\mathcal{O}}
\newcommand{\bbZ}{\mathbb{Z}}
\newcommand{\Z}{\mathbb{Z}}
\newcommand{\bbA}{\mathbb{A}}
\newcommand{\bbQ}{\mathbb{Q}}
\newcommand{\bbR}{\mathbb R}
\newcommand{\bbC}{\mathbb C}
\newcommand{\bbG}{\mathbb G}
\newcommand{\bbF}{\mathbb F}
\newcommand{\Tr}{ \mathrm{tr}}
\newcommand{\SL}{ \mathrm{SL}}
\newcommand{\SO}{ \mathrm{SO}}
\newcommand{\SU}{ \mathrm{SU}}
\newcommand{\GL}{ \mbox{GL}}
\newcommand{\Sp}{\mathrm{Sp}}
\newcommand{\Mat}{\mathrm{Mat}}
\newcommand{\spec}{\mathrm{Spec}}
\newcommand{\vol}{\mathrm{vol}}
\newcommand{\lap}{\triangle}
\newcommand{\bs}{\backslash}
\newcommand{\id}{1\!\!1}
\newcommand{\frakp}{\mathfrak{p}}
\newcommand{\frakq}{\mathfrak{q}}
\newcommand{\fraka}{\mathfrak{a}}
\newcommand{\frakd}{\mathfrak{d}}
\newcommand{\frakg}{\mathfrak{g}}
\newcommand{\OF}{\calO_F}
\newcommand{\kp}{{\rm{k}_\frakp}}
\newcommand{\kb}{{\bar{\rm{k}}_\frakp}}
\DeclareMathOperator{\Lie}{Lie}
\DeclareMathOperator{\Res}{Res}
\newcommand{\Dq}{\Delta(\frakq)}
\newcommand{\Gq}{\Gamma(\frakq)}
\newcommand{\Gqp}{\Gamma(\frakq')}
\newcommand{\Vq}{V(\frakq)}
\newcommand{\Rq}{R_\frakq}
\newcommand{\GG}{\mathbf{G}}
\begin{document}
\title[]{A uniform spectral gap for congruence covers of a hyperbolic manifold}%
\author{Dubi Kelmer and Lior Silberman}%
\address{Department of Mathematics, University of Chicago,  5734 S. University
Avenue Chicago, Illinois 60637}
\email{kelmerdu@math.uchicago.edu}
\address{University of British Columbia, Department of Mathematics
1984 Mathematics Road Vancouver   BC   V6T 1Z2}
\email{lior@math.ubc.ca}

\subjclass{}%
\keywords{}%

\date{\today}%
\dedicatory{}%
\commby{}
\begin{abstract}
Let $G$ be $\SO(n,1)$ or $\SU(n,1)$ and let $\Gamma\subset G$
denote an arithmetic lattice.
The hyperbolic manifold $\Gamma\bs \calH$ comes with a natural family of covers, coming from the congruence subgroups of $\Gamma$.
In many applications, it is useful to have a bound for the spectral gap that is uniform for this family.
When $\Gamma$ is itself a congruence lattice, there are very good bounds coming from known results
towards the Ramanujan conjectures. In this paper, we establish an effective bound that is uniform for
congruence subgroups of a non-congruence lattice.
\end{abstract}
\maketitle
\section*{Introduction}
Let $G$ be a connected almost simple Lie group, let $K\subset G$ be a maximal
compact subgroup and let $\Gamma \subset G$ be a lattice.  We write
$X=\Gamma\bs G/K=\Gamma\bs \calH$ for the corresponding locally symmetric
space, endowed with the hyperbolic Riemannian metric coming from the
Killing form on $\Lie(G)$.  Let $\lap$ denote the (non-negative)
Laplace-Beltrami operator on $\calH$ and $X$, and
let $\spec(\Gamma)$ denote the point spectrum
of this operator on $L^2(X)$ (that is, the set of eigenvalues with
eigenfunctions in $L^2(X)$).  Since $\Gamma$ is a lattice, the constant
function is an eigenfunction and $0=\lambda_0\in \spec(\Gamma)$.  This
eigenvalue is simple by the maximum principle, and we accordingly write
$\lambda_1(\Gamma)$ for the smallest positive member of
$\spec(\Gamma)$.

We say that the lattice $\Gamma$ has \emph{property $(\tau)$} with respect
to a family of finite index subgroups, if $\lambda_1(\Gamma')$ is uniformly
bounded away from zero for all subgroups $\Gamma'\subset \Gamma$ in this
family.
When $\Gamma$ is an \emph{arithmetic lattice}, it has a natural family of
finite-index subgroups, its \emph{congruence subgroups} (defined below).
We are interested in establishing property $(\tau)$ with respect to this family.

\begin{rem}
It was essentially shown by Brooks \cite{Brooks86} that the definition
of property $(\tau)$ given above is equivalent to the condition
that the trivial representation is isolated (in the Fell topology)
from the other irreducible representations of $G$ occurring in
$L^2(\Gamma'\bs G)$ for all $\Gamma'$ in the family. The latter
condition makes sense also for semisimple groups of higher rank,
and this is the usual definition of property $(\tau)$.
See the books \cite{LubotzkyZuk,Lubotzky10} for more details.
\end{rem}

When $G$ has property $(T)$ (the trivial representation is isolated in the
full unitary dual), there is a uniform spectral gap for \emph{all}
locally symmetric spaces covered by $G/K$, in particular for the family
of all finite-index subgroups of $\Gamma$.  This is the case for groups $G$
with real rank greater than one, and also for the rank one groups
$\Sp(n,1)$ and $F_4^{-20}$.

In the remaining cases of $\SO(n,1)$ and $\SU(n,1)$, not only
is there no uniform gap varying over all lattices, but the constructions
of Randol \cite{Randol74} and Selberg \cite{Selberg65}
further show that arithmetic lattices never have property $(\tau)$
with respect to the family of all finite index subgroups.

The starting point of our work is the fact that when $\Gamma$ is a
\emph{congruence lattice} (defined below), it has property $(\tau)$
with respect to its family of congruence subgroups.
This was first established by Selberg for
$\SL_2(\Z) \subset \SL_2(\bbR)\cong \SO(2,1)$, and later in many cases by
Burger and Sarnak \cite{BurgerSarnak91}.  Finally Clozel \cite{Clozel03}
obtained this in full generality.
Moreover, for congruence lattices, there are uniform
(and in many cases very good) explicit bounds for the spectral gap
coming from known bounds towards the Generalized Ramanujan conjectures
in the general linear group.

Much less is known when $\Gamma$ is arithmetic (but not congruence).
For the special cases of $\SO(2,1)$ and $\SO(3,1)$, Sarnak and Xue
\cite{SarnakXue91} gave an elementary argument showing that any
arithmetic lattice satisfies property $(\tau)$ with respect to its
congruence subgroups.  Their argument produces explicit bounds that are
uniform for the part of the spectrum orthogonal to the spectrum of the
original group.  In this paper, we will generalize their method to obtain
a similar result for all arithmetic lattices in $\SO(n,1)$ and $\SU(n,1)$.
In particular, our result establishes property $(\tau)$ for the family
of (most) congruence subgroups of a fixed arithmetic lattices.
We note, however, that
our result is no longer elementary: in order to establish property $(\tau)$
for arithmetic lattices, we rely on known property $(\tau)$ for
congruence lattices.

\begin{rem}
The results on property $(\tau)$ for congruence lattices also apply for an (irreducible) congruence lattice in a semi-simple group of the form  $G=\prod_j G_j$. Moreover, in this case the result is even stronger, and it implies that for any irreducible representation $\pi=\bigotimes_j\pi_j$ occurring in $L^2(\Gamma\bs G)$ each factor $\pi_j$ is uniformly bounded away from the trivial representation. This stronger notion is referred to as a \emph{strong spectral gap}; see \cite{kelmerSarnak09} for details. In \cite{kelmer10gap}, the first author extended the method of Sarnak and Xue to give a uniform bound for the \emph{strong spectral gap} for congruence subgroups of a (potentially non-congruence) irreducible lattice in a product $\SO(2,1)^d$ for $d\geq 2$. It might be possible to use the results of this paper in order to obtain a similar result for a product of arbitrary rank one groups.
\end{rem}

We now set the notation required for stating our main result.
We parameterize the eigenvalue $\lambda$ in the standard form
$\lambda_s = \rho^2 - s^2$ with $s\in i\bbR\cup (0,\rho]$ and
$\rho=\tfrac{n-1}{2}$ for $\SO(n,1)$, $\rho=\tfrac{n}{2}$
for $\SU(n,1)$. With this notation, we consider $\spec(\Gamma)$
as a subset of $i\bbR\cup (0,\rho]$.

Let $F\subset \bbR$ denote a totally real number field with a fixed
infinite place.  Let $\bbG\subset \GL_n$ be a closed $F$-subgroup, such that
$G = \bbG(\bbR)$ is isomorphic to $\SO(n,1)$ or $\SU(n,1)$, while in all other
infinite places $\bbG(F_v)$ is compact.

Let $\OF \subset F$ denote the ring of integers of $F$ and set
$\Delta = \bbG(F) \cap \GL_n(\OF)$ (by abuse of notation we commonly
write $\bbG(\OF)$ for this group).
Now, for any ideal $\frakq \subset \OF$, we have the
\emph{principal congruence group}
\[\Dq=\{\gamma \in \Delta |\; \gamma \equiv I\pmod{\frakq}\}.\]

By definition, a lattice $\Gamma$ is a \emph{congruence group} if
it contains $\Dq$ for some $\frakq$, and it is an \emph{arithmetic} lattice if
it is commensurable with $\Delta$.  The definition of $\Delta$ above
depends on the specific choice of embedding of $\bbG$ in a general
linear group, but the commensurability class and the notion of a
congruence subgroup are independent of this choice.

With these notation, the result of Clozel on property $(\tau)$ can be
stated as follows: There is a positive constant $\alpha=\alpha(\bbG)$
such that for all ideals $\frakq$
\begin{equation}\label{e:sgcong}
\spec(\Delta(\frakq))\subset i\bbR\cup (0,\rho-\alpha)\cup\{\rho\}.
\end{equation}
\begin{rem}
The Generalized Ramanujan Conjectures would imply that $\alpha=\min\{1,\rho\}$.
For the orthogonal groups, the method of Burger and Sarnak
\cite{BurgerSarnak91}, together with the results of
\cite{KimSarnak03,BlomerBrumley10_preprint} on the automorphic spectrum
of $\GL_2(\bbA)$, imply that $\alpha\geq \frac{25}{64}$ for $\SO(2,1)$,
and that $\alpha\geq\frac{25}{32}$ for $\SO(n,1),\;n\geq 3$.
These bounds also hold for certain arithmetic lattices in unitary groups.
However, in general the best known bounds for the unitary groups are due to
Clozel \cite{Clozel03}, and come from lifting representations to
$\GL_n(\bbA)$ and using the bounds towards the Ramanujan Conjecture due to Luo,
Rudnick and Sarnak \cite{LuoRudnickSarnak99}.
\end{rem}

Fixing an arithmetic lattice $\Gamma$ we call the following its
\emph{congruence subgroups}:
\[\Gq=\Gamma \cap \Dq.\]
In what follows, we will assume without loss of generality
(by replacing $\Gamma$ with $\Gamma(1)$) that $\Gamma$ is a finite index
subgroup of $\Delta$.
When $\frakq'|\frakq$ we have $\Gq \subset \Gqp$
so we can identify $L^2(\Gqp\bs \calH)$ with the subspace
of left-$\Gqp$-invariant functions in $L^2(\Gq\bs\calH)$.
We write $L^2(\Gq\bs\calH)_\textrm{old}$ for the sum of these
subspaces as $\frakq'$ ranges over the (finite set) of ideals properly
dividing $\frakq$, and $L^2(\Gq\bs\calH)_\textrm{new}$
for its orthogonal complement.
Both spaces are preserved by the Laplace-Beltrami operator and we write
$\spec(\Gq)_\textrm{new}$ for the set of eigenvalues in
$L^2(\Gq\bs\calH)_\textrm{new}$.

We can now state our main result.
\begin{thm}\label{t:main}
Let $\Gamma\subset \bbG(\OF)$ be of finite index.
There is an ideal $\frakd=\frakd(\Gamma)$, such that for any $\epsilon>0$
there is a constant $q_0=q_0(\Gamma,\epsilon)$ such that
$$ \spec(\Gq)_{\rm{new}} \subseteq i\bbR\cup [0,\rho-\eta\alpha+\epsilon],$$
for all ideals $\frakq$ prime to $\frakd$ of norm at least $q_0$.
Here, $\alpha=\alpha(\bbG)$ is as in \eqref{e:sgcong} and $\eta=\eta(G)$
is an explicit constant given by
\begin{equation}\label{e:eta}\eta(G)=\left\lbrace\begin{array}{cc}
\frac{4}{3n(n+1)} & G\cong \SO(n,1),\;2\leq n<6\\
\frac{4(n-2)}{3n(n+1)} & G\cong \SO(n,1),\;n\geq 6\\
\frac{2}{3(n+2)} & G\cong \SU(n,1),\; n\geq 1
\end{array}\right.\end{equation}
\end{thm}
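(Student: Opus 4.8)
The plan is to adapt the Sarnak--Xue counting method, replacing their elementary representation-theoretic input with the known spectral gap \eqref{e:sgcong} for congruence lattices.

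\medskip

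\noindent\textbf{Step 1: From eigenvalues to matrix coefficients.}
Fix an eigenfunction in $L^2(\Gq\bs\calH)_{\rm new}$ with parameter $s$, generating an irreducible spherical representation $\pi_s$ of $G$ occurring in $L^2(\Gq\bs G)$. The decay rate of the spherical matrix coefficients of $\pi_s$ is governed by $\Re(s)$: for $s\in(0,\rho]$ the matrix coefficient behaves like $e^{(\Re(s)-\rho)\,\mathrm{dist}}$ up to polynomial factors. Concretely, I would show that if $\Re(s) = \rho - \beta$ then for a well-chosen bi-$K$-invariant test function $\psi_R$ supported on the ball of radius $R$ in $G$ (a ``thickened sphere''), the operator norm of $\pi_s(\psi_R)$, or equivalently the diagonal matrix coefficient $\langle \pi_s(\psi_R)v_0, v_0\rangle$ against the spherical vector, is bounded below by roughly $e^{(\rho-\beta)R}$ (this is where the bound toward Ramanujan is \emph{not} yet used — here I exploit the presence of $\pi_s$, not its size).

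\medskip

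\noindent\textbf{Step 2: The counting upper bound.}
On the other hand, $\langle \pi_s(\psi_R)v_0,v_0\rangle$ appears as a term in the spectral side of the pretrace formula on $\Gq\bs\calH$, so it is bounded above by the full geometric sum $\sum_{\gamma\in\Gq} \psi_R(g^{-1}\gamma g)$ — that is, by $N_R(\Gq)$, the number of elements of $\Gq$ of translation length $\lesssim R$ (after accounting for the multiplicity of the eigenvalue, which contributes polynomially in $[\Gamma:\Gq]$). The crucial arithmetic input is an upper bound
$$ N_R(\Gq) \ll_\epsilon \big(1 + e^{R}\,[\Gamma:\Gq]^{-\kappa}\big)\,e^{\epsilon R}\,[\Gamma:\Gq]^{\epsilon}$$
for a constant $\kappa=\kappa(G)>0$. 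The point is that a nontrivial $\gamma\in\Gq$ is congruent to $I$ modulo $\frakq$, so its matrix entries are ``large'' in the archimedean-times-$\frakp$-adic sense: $\gamma = I + \frakq\cdot(\text{integral matrix})$, which forces the archimedean size of $\gamma$, hence its translation distance, to be $\gg \log q$ unless $\gamma = I$. Counting lattice points in $\Delta$ in a ball of radius $R$ gives $\ll e^{2\rho R}$ by the known volume asymptotics, and the congruence condition cuts this down by a factor $\sim q^{\dim}$; comparing $q^{\dim}$ with $[\Gamma:\Gq]\asymp q^{\dim \bbG}$ and optimizing produces the exponent $\kappa$. The numbers $\eta(G)$ in \eqref{e:eta} are exactly what comes out of this optimization (the dimension counting differs between $\SO$ and $\SU$, and between small and large $n$ because of how the unipotent/semisimple elements contribute).

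\medskip

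\noindent\textbf{Step 3: Combining, and the role of $\alpha$.}
Putting Steps 1 and 2 together: $e^{(\rho-\beta)R} \ll N_R(\Gq)\cdot(\text{poly})$. If $\beta$ were too small — i.e. the new eigenvalue too close to $0$ — this would force $N_R(\Gq)$ to be large for all $R$, contradicting the counting bound once $R\sim \log q$. Quantitatively, choosing $R$ proportional to $\log q$ shows $\rho-\beta \le 2\rho(1-\kappa) + \epsilon$ roughly; but one gets a better bound by using \eqref{e:sgcong}: the \emph{old} spectrum, coming from $\Dq\supseteq$ (congruence subgroup structure), already satisfies $\Re(s)\le \rho-\alpha$, so in bounding the contribution to the pretrace formula one can separate the genuinely problematic part and feed in $\alpha$ to replace the trivial bound $\rho$ by $\rho-\alpha$ on the relevant piece, yielding $\rho - \beta \le \rho - \eta\alpha + \epsilon$. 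This is the step that makes the result non-elementary.

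\medskip

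\noindent\textbf{Main obstacle.}
I expect the hard part to be Step 2, specifically establishing the congruence lattice-point count uniformly in $\frakq$ with the correct exponent and dealing with torsion/unipotent conjugacy classes: near-central elements $\gamma\equiv I\pmod\frakq$ that are not the identity but have small translation length must be excluded, which requires a quantitative lower bound on how far from $I$ such an element can be — this is where the hypothesis that $\bbG(F_v)$ is compact at all other infinite places, and that $\frakq$ is prime to the ``bad'' ideal $\frakd$, enter. Controlling the multiplicity of eigenvalues (to keep the polynomial-in-index factors from overwhelming the main term) and ensuring the spectral-side lower bound in Step 1 survives the $K$-finiteness bookkeeping are secondary technical points.
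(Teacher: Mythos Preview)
Your overall architecture — upper bound on multiplicities via a lattice-point count, combined with a lower bound, optimizing in the ball radius — matches the paper's. But you have the two key inputs swapped, and as written the argument has a genuine gap.

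\textbf{Where $\eta$ actually comes from.} You claim that $\eta$ arises from optimizing the congruence lattice-point count, and that the semisimple/nilpotent case split reflects how such elements contribute to that count. This is not right. In the paper, $\eta$ comes from a \emph{lower} bound on the multiplicity of a new eigenvalue: the finite group $\Gamma/\Gq\cong\GG(\OF/\frakq)$ acts on the eigenspace $V_\lambda(\Gq)$, and ``new'' means this action contains a representation not factoring through any $\Gamma/\Gqp$. One then needs that every such new irreducible representation has dimension $\gtrsim \Vq^{\eta}$. For prime level this is the Land\'azuri--Seitz bound on minimal degrees of representations of finite groups of Lie type; for prime-power level it is a co-adjoint-orbit argument reducing to centralizer estimates in $\frakg(\kp)$ — \emph{that} is where the semisimple/nilpotent dichotomy and the ideal $\frakd$ (bad primes, strong approximation, ramification) enter. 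You treat the multiplicity as a nuisance polynomial factor to be controlled from above; in fact its size from below is half of the proof, and you have no mechanism for it.

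\textbf{Where $\alpha$ actually enters.} You propose doing the lattice-point count elementarily (``the congruence condition cuts this down by a factor $\sim q^{\dim}$''); the paper explicitly says this direct approach is out of reach beyond $\SO(2,1)$ and $\SO(3,1)$. Instead, the paper bounds $N(\Gq,T)\le N(\Dq,T)$ and estimates the latter \emph{spectrally}: writing a smoothed count as an integral of matrix coefficients on $L^2(\Dq\bs G)$ and invoking the decay rate $e^{-\alpha t}$ on the orthogonal complement of constants, which is exactly the congruence gap \eqref{e:sgcong}. This yields $N(\Gq,T)\lesssim \Vq^{-1}e^{2\rho T}+e^{(2\rho-\alpha)T}$. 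So $\alpha$ lives in the counting/upper-bound step, not in a separate old/new spectral decomposition as in your Step~3. Feeding this count into the pretrace inequality gives $m(\lambda_s,\Gq)\lesssim \Vq^{(\rho-s)/\alpha}$; comparing with the lower bound $m\gtrsim \Vq^{\eta-\epsilon}$ forces $s\le \rho-\eta\alpha+\epsilon$ once $\Vq$ is large.
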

As an immediate corollary we get that $\Gamma$ satisfies property $(\tau)$
with respect to its congruence subgroups of levels prime to $\frakd$.
More precisely,
\begin{cor}
There is a positive constant $\delta>0$ (depending on $\Gamma$) such that
$\lambda_1(\Gamma(\frakq))>\delta$ for all ideals $\frakq$ prime to $\frakd$.
\end{cor}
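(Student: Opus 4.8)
The plan is, taking Theorem~\ref{t:main} as given, to proceed in three moves: (i) show that every eigenvalue of $\Gq$ occurs already in the \emph{new} spectrum of some $\Gamma(\frakq'')$ with $\frakq''\mid\frakq$; (ii) bound those new spectra --- using Theorem~\ref{t:main} for the levels of large norm, and treating by hand the finitely many levels of small norm; (iii) take the worst of the finitely many resulting lower bounds.

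For move (i), fix a divisor $\frakq'\mid\frakq$. Then $\Gq$ is normal in $\Gqp$ (because $\Dq$ is normal in $\Delta$), so the finite group $\Gqp/\Gq$ acts by isometries on $L^2(\Gq\bs\calH)$, and the averaging operator $\Pi_{\frakq'}$ over this group is the orthogonal projection onto $L^2(\Gqp\bs\calH)$. Being assembled from isometries, $\Pi_{\frakq'}$ commutes with $\lap$ and hence sends each $\lambda$-eigenfunction to a $\lambda$-eigenfunction or to $0$. Since $L^2(\Gq\bs\calH)_{\mathrm{new}}$ is, by definition, the orthogonal complement of $\sum_{\frakq'}L^2(\Gqp\bs\calH)$ (sum over proper divisors), and each $\Pi_{\frakq'}$ is self-adjoint, a function lies in $L^2(\Gq\bs\calH)_{\mathrm{new}}$ precisely when it is killed by every $\Pi_{\frakq'}$ with $\frakq'$ a proper divisor. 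So, starting from a positive $\lambda\in\spec(\Gq)$ with eigenfunction $f$: if $f$ is new, stop; otherwise $\Pi_{\frakq'}f\neq0$ for some proper $\frakq'$, and $\Pi_{\frakq'}f$ is a $\lambda$-eigenfunction at the level $\frakq'$, of strictly smaller norm. This iteration terminates, and yields $\spec(\Gq)\subseteq\bigcup_{\frakq''\mid\frakq}\spec(\Gamma(\frakq''))_{\mathrm{new}}$; note that every divisor of an ideal prime to $\frakd$ is again prime to $\frakd$.

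For moves (ii) and (iii), apply Theorem~\ref{t:main} with $\epsilon=\tfrac12\eta\alpha>0$ to obtain $q_0=q_0(\Gamma)$ such that $\spec(\Gamma(\fraka))_{\mathrm{new}}\subseteq i\bbR\cup[0,\rho-\tfrac12\eta\alpha]$ for every ideal $\fraka$ prime to $\frakd$ with $N(\fraka)\geq q_0$; via $\lambda_s=\rho^2-s^2$ this forces every eigenvalue of $L^2(\Gamma(\fraka)\bs\calH)_{\mathrm{new}}$ to be $\geq\delta_2:=\rho^2-(\rho-\tfrac12\eta\alpha)^2$ (the $i\bbR$ part gives $\geq\rho^2\geq\delta_2$), and $\delta_2>0$ because $\tfrac12\eta\alpha<\rho$ --- indeed $\alpha<\rho$ already in \eqref{e:sgcong}. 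The set $S=\{\,\fraka:\ \fraka\ \text{prime to}\ \frakd,\ N(\fraka)<q_0\,\}$ is finite, since $\OF$ has only finitely many ideals of bounded norm; for each $\fraka\in S$ the group $\Gamma(\fraka)$ is a lattice, so $\lambda_1(\Gamma(\fraka))>0$, and I set $\delta_1=\min_{\fraka\in S}\lambda_1(\Gamma(\fraka))>0$. Now let $\delta=\min(\delta_1,\delta_2)>0$; it depends only on $\Gamma$. Given any ideal $\frakq$ prime to $\frakd$ and any positive $\lambda\in\spec(\Gq)$, move (i) places $\lambda$ in $\spec(\Gamma(\frakq''))_{\mathrm{new}}$ for some $\frakq''\mid\frakq$, necessarily prime to $\frakd$; if $N(\frakq'')\geq q_0$ then $\lambda\geq\delta_2$, and if $N(\frakq'')<q_0$ then $\frakq''\in S$ and, $\lambda$ being a positive element of $\spec(\Gamma(\frakq''))$, $\lambda\geq\lambda_1(\Gamma(\frakq''))\geq\delta_1$. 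In either case $\lambda_1(\Gq)\geq\delta$, which is the Corollary.

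Since Theorem~\ref{t:main} carries all the analytic content, I do not expect a genuine obstacle in this deduction: the only points that need a little care are the self-adjointness and orthogonal-complement bookkeeping in move (i) --- the clean characterisation of $L^2(\Gq\bs\calH)_{\mathrm{new}}$ by the vanishing of the $\Pi_{\frakq'}$ --- and the standard fact, already tacit in the definition of $\lambda_1$ in the introduction, that a lattice has only finitely many exceptional eigenvalues, so that $\lambda_1$ is honestly positive and the minimum defining $\delta_1$ is legitimate.
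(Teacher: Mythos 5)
Your deduction is correct, and it supplies exactly what the paper elides in calling this an ``immediate corollary'' of Theorem~\ref{t:main}: iterate the old/new decomposition to place any positive $\lambda\in\spec(\Gq)$ in $\spec(\Gamma(\frakq''))_{\rm new}$ for some $\frakq''\mid\frakq$ (automatically prime to $\frakd$), apply Theorem~\ref{t:main} when $N(\frakq'')\geq q_0$, and use finiteness of the remaining levels together with $\lambda_1>0$ for each. One small slip in the side remark: \eqref{e:sgcong} by itself does not force $\alpha<\rho$; but you do not need that, since $\eta<1$ already gives $\tfrac12\eta\alpha<\rho$ whenever $\alpha\leq\rho$, and if $\rho-\tfrac12\eta\alpha$ were negative the interval $[0,\rho-\tfrac12\eta\alpha]$ would be empty and the new spectrum tempered, yielding the even larger bound $\rho^2$.
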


\begin{rem}
Following the results of Breuillard, Green, and Tao
\cite{BreuillardGreenTao10_preprint} and Pyber and Szab\'o \cite{PyberSzabo10_preprint}, the work of Salehi Golsefidy and Varj\'u \cite{SalehiVarju_preprint, Varju10_preprint} (generalizing the results of Bourgain, Gamburd, and Sarnak  \cite{BourgainGamburdSarnak10a,BourgainGamburdSarnak10}) also establishes property $(\tau)$ with respect to congruence subgroups of square free levels.
However, this method does not produce an explicit bound for the spectral gap.
In fact, \cite{SalehiVarju_preprint,Varju10_preprint} applies also when $\Gamma\subset \bbG(\OF)$ is of
infinite index, as long as it is geometrically finite and Zariski dense.
See also \cite{BourgainVarju10_preprint} for similar results for $\Gamma\subset\SL_n(\bbZ)$ and congruence subgroups of arbitrary level.
\end{rem}

Before we explain the strategy of our proof, let us describe the
original result of Sarnak and Xue in more detail.
For any eigenvalue $\lambda$ we denote by $m(\lambda,\Gamma)$ the multiplicity
of $\lambda$ in $\spec(\Gamma)$.
Using the Selberg trace formula, de George and Wallach
\cite{DeGeorgeWallach78} showed that for $\Gamma$ co-compact
$m(\lambda,\Gamma)\lesssim\vol(\Gamma\bs G)$ and that
$m(\lambda,\Gamma)=o(\vol(\Gamma\bs G))$ when $\lambda<\rho$.
In \cite{Xue91}, Xue sharpened this result for congruence subgroups,
showing that for $\lambda<\rho$,
\[m(\lambda,\Gq)\lesssim \Vq^{1-\mu(\lambda)},\]
where $\Vq=[\Gamma:\Gq]$ and
$0<\mu(\lambda)<\frac{1}{n^3}$ is a small constant.

In \cite{SarnakXue91}, Sarnak and Xue conjectured that the correct
bound should be
\begin{equation}\label{e:SX}
m(\lambda_s,\Gq)\lesssim_\epsilon \Vq^{\frac{\rho-s}{\rho}+\epsilon}.
\end{equation}
They related this conjecture to a Diophantine conjecture, concerning the lattice point counting function
\[N(\Gq,T)=\{\gamma\in \Gq|\;\;d(\gamma o,o)\leq T\},\]
where $o\in\calH$ is the fixed point of $K$ and $d(x,y)$ is the hyperbolic distance.
In particular, they conjectured that
\begin{equation}\label{e:SX2}N(\Gq,T)\lesssim_\epsilon \frac{e^{2\rho T(1+\epsilon)}}{\Vq}+e^{\rho T},\end{equation}
uniformly in $T$ and $\frakq$, and showed that this conjecture implies the multiplicity bound (\ref{e:SX}).

For $\SO(2,1)$ and $\SO(3,1)$, they managed to establish the bound
(\ref{e:SX2}) by an elementary counting argument, thus proving \eqref{e:SX}
in these cases.  Finally, for $\SO(2,1)$ (respectively $\SO(3,1)$) they
showed that for eigenvalues in $\spec(\Gq)_{\rm{new}}$, the spectral
multiplicity $m(\lambda,\Gq)$ is bounded below by $\Vq^{\frac{1}{3}-\epsilon}$
(respectively  $\Vq^{\frac{2}{3}-\epsilon}$).
Combining this bound with \eqref{e:SX}, it followed that if
$s<\rho-\tfrac{1}{6}$, then $\lambda_s\not\in \spec(\Gamma(\frakq))_{\rm{new}}$
when $\Vq$ was sufficiently large.

In order to prove Theorem \ref{t:main}, we will follow the same
general strategy.  However, proving the Diophantine bound \eqref{e:SX2}
in general by a direct counting argument seems out of reach.
Instead we will take a different approach, using spectral theory and decay
of matrix coefficients (similar arguments are used in
\cite{BourgainKontorovichSarnak10, GorodnikNevo09}).
Our method does not give the conjectured bound \eqref{e:SX2}.  Instead, using \eqref{e:sgcong} for the spectrum of congruence lattices,
we get the following bound:
\begin{thm}\label{t:counting}
$N(\Gq,T)\lesssim \frac{e^{2\rho T}}{\Vq}+e^{(2\rho -\alpha)T}$
uniformly in $T$ and $\frakq$, where $\alpha$ is as in \eqref{e:sgcong}.
\end{thm}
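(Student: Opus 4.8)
The plan is to convert the lattice-point count $N(\Gamma(\frakq),T)$ into a spectral quantity by testing against a suitable point-pair invariant, and then to bound that spectral quantity using the known spectral gap \eqref{e:sgcong} for the congruence group $\Delta(\frakq)$. First I would fix a radial bump function: choose $\psi_T \in C_c^\infty(\calH \times \calH)$ supported in a ball of radius $T + O(1)$ about the diagonal, with $\psi_T(x,y) \geq \id_{d(x,y) \leq T}$ and $L^\infty$-norm $O(1)$, so that $N(\Gamma(\frakq),T) \leq \sum_{\gamma \in \Gamma(\frakq)} \psi_T(\gamma o, o)$. The right-hand side is the value at $(o,o)$ of the automorphic kernel $K_{\frakq}(x,y) = \sum_{\gamma \in \Gamma(\frakq)} \psi_T(\gamma x, y)$, which is exactly the Schwartz kernel of the convolution operator $\pi_{\frakq}(\check\psi_T)$ acting on $L^2(\Gamma(\frakq)\bs \calH)$, where $\check\psi_T$ is the corresponding bi-$K$-invariant function on $G$.

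The key step is the spectral estimate for $\langle \pi_{\frakq}(\check\psi_T)\delta_o, \delta_o\rangle$. Expanding in the spectral decomposition of $L^2(\Gamma(\frakq)\bs\calH)$, the contribution of the constant function is the ``main term'' $\frac{1}{\Vq}\int_{\calH, d(x,o)\leq T+O(1)} 1 \asymp \frac{e^{2\rho T}}{\Vq}$ (after dividing by $\vol(\Gamma\bs\calH)$, which is a constant). Every other spherical eigenfunction $\phi$ with parameter $s$ contributes $\widehat{\psi_T}(s)\,|\phi(o)|^2$, where $\widehat{\psi_T}(s)$ is the spherical transform; a standard estimate gives $|\widehat{\psi_T}(s)| \lesssim e^{(\rho+\Re s)T}$. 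Here is where \eqref{e:sgcong} enters: since $\Gamma(\frakq) \supseteq \Delta(\frakq')$ is irrelevant --- rather, the point is that the nontrivial spectrum of $\Delta(\frakq)$ has $\Re s \leq \rho - \alpha$, and one must relate $\spec(\Gamma(\frakq))$ to $\spec(\Delta(\frakq))$. Since $\Gamma \subseteq \Delta$ has finite index, $\Gamma(\frakq) = \Gamma \cap \Delta(\frakq)$ contains $\Delta(\frakq)$ only up to finite index, so $L^2(\Gamma(\frakq)\bs\calH)$ need not embed in $L^2(\Delta(\frakq)\bs\calH)$. I would handle this by passing to the common normal subgroup or, more simply, by noting that any exceptional eigenfunction on $\Gamma(\frakq)\bs\calH$ can be averaged up to $\Delta \cap \Gamma(\frakq) \cdot(\text{something})$; the cleanest route is to use that $\Delta(\frakq)$ has bounded index over $\Gamma(\frakq) \cap \Delta(\frakq) = \Gamma(\frakq)$'s intersection with a deeper congruence group, so the exceptional part of $\spec(\Gamma(\frakq))$ still satisfies $\Re s \leq \rho - \alpha + o(1)$ for $\Vq$ large. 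Summing the exceptional contributions and bounding $\sum |\phi(o)|^2$ by a local (pointwise) Weyl law, which is $O(1)$ uniformly since we are on a fixed covolume-normalized space and the relevant operator has bounded norm, yields $\lesssim e^{(2\rho - \alpha)T}$.

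The main obstacle I expect is precisely this transfer of the spectral gap from the congruence group $\Delta(\frakq)$ to the possibly-non-congruence group $\Gamma(\frakq)$: one needs to know that non-tempered spectrum of $\Gamma(\frakq)$ below $\rho - \alpha$ does not proliferate. The resolution should be that such spectrum, being residual/exceptional, either comes from the ambient $\Delta(\frakq)$ (hence is controlled by \eqref{e:sgcong}) or is suppressed by the $\frac{e^{2\rho T}}{\Vq}$ term after one observes the multiplicity of a fixed exceptional eigenvalue on $\Gamma(\frakq)\bs\calH$ grows at most like $\Vq^{1-\mu}$ (de George--Wallach / Xue), so its total contribution is $\lesssim \Vq^{-\mu} e^{(\rho + \Re s)T}$, which is absorbed. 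The secondary technical point is making the spherical-transform bound $|\widehat{\psi_T}(s)| \lesssim e^{(\rho + \Re s)T}$ uniform in $s$ over the strip $0 \leq \Re s \leq \rho$, including the tempered range $\Re s = 0$ where one wants $\lesssim e^{\rho T}$ --- this follows from the explicit asymptotics of the spherical function $\varphi_s$ on rank-one $G$, namely $|\varphi_s(a_t)| \lesssim (1+t)e^{(-\rho + |\Re s|)t}$, integrated against the volume element $e^{2\rho t}dt$ over $t \leq T + O(1)$.
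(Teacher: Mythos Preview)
Your proposal has a genuine gap at exactly the point you flag as the ``main obstacle'': the transfer of the spectral gap from $\Delta(\frakq)$ to $\Gamma(\frakq)$.  Neither of your suggested resolutions works.  First, the inclusion goes the wrong way for your purposes: since $\Gamma\subset\Delta$, we have $\Gamma(\frakq)=\Gamma\cap\Delta(\frakq)\subset\Delta(\frakq)$, so $L^2(\Delta(\frakq)\bs\calH)$ embeds in $L^2(\Gamma(\frakq)\bs\calH)$ and hence $\spec(\Delta(\frakq))\subset\spec(\Gamma(\frakq))$.  The bound \eqref{e:sgcong} therefore says nothing about the \emph{extra} exceptional spectrum of $\Gamma(\frakq)$.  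Because $\Gamma$ is not assumed to be congruence, $\Gamma$ itself---and hence every $\Gamma(\frakq)$---may carry exceptional eigenvalues with parameter $s>\rho-\alpha$; such an eigenvalue contributes $\asymp e^{(\rho+s)T}$ to your kernel, which already exceeds $e^{(2\rho-\alpha)T}$ regardless of multiplicity.  Your assertion that ``the exceptional part of $\spec(\Gamma(\frakq))$ still satisfies $\Re s\leq \rho-\alpha+o(1)$'' is essentially the content of Theorem~\ref{t:main} and cannot be invoked here.  Second, appealing to Xue's multiplicity bound is circular in this paper's logic (Theorem~\ref{t:multiplicity} is \emph{derived} from Theorem~\ref{t:counting}), and in any case a multiplicity saving does nothing to tame a single fixed exceptional eigenvalue of $\Gamma$ with $s>\rho-\alpha$.

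The paper sidesteps the whole issue with a one--line observation you do not use: since $\Gamma(\frakq)\subset\Delta(\frakq)$, one has the trivial monotonicity
\[
N(\Gamma(\frakq),T)\leq N(\Delta(\frakq),T),
\]
and it suffices to bound the right-hand side.  All the harmonic analysis is then carried out on $L^2(\Delta(\frakq)\bs G)$, where \eqref{e:sgcong} applies directly and uniformly in $\frakq$; the decay of matrix coefficients for vectors orthogonal to constants gives $|\langle R_\frakq(g)\psi^\perp,\psi^\perp\rangle|\lesssim e^{-\alpha t(g)}$, and integrating against the ball volume yields the claimed $\frac{e^{2\rho T}}{V(\frakq)}+e^{(2\rho-\alpha)T}$.  (That $[\Delta:\Delta(\frakq)]\asymp V(\frakq)$ follows from $[\Delta:\Gamma]<\infty$.)  Your kernel/spectral--expansion framework would work equally well once transplanted to the congruence quotient $\Delta(\frakq)\bs\calH$; the essential missing idea is simply to count in $\Delta(\frakq)$ rather than in $\Gamma(\frakq)$.
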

Using this weaker bound for the counting function, one gets the following
intermediate bound for the multiplicities.
\begin{thm}\label{t:multiplicity}
$m(\lambda_s,\Gq)\lesssim \Vq^{\frac{\rho-s}{\alpha}}$ for $s\in (0,\rho)$.
\end{thm}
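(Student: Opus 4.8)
The plan is to run a standard Selberg-trace-formula (or rather heat-kernel / point-pair-invariant) argument bounding the eigenvalue multiplicity by a geometric count, and then feed in Theorem~\ref{t:counting}. Fix $s\in(0,\rho)$ and a compactly supported point-pair invariant $k_T$ on $\calH\times\calH$ whose spherical transform $h_T(\cdot)$ is non-negative on $i\bbR\cup(0,\rho]$ and satisfies $h_T(r)\gtrsim e^{2sT}$ whenever the parameter of $r$ lies in a fixed neighborhood of $s$; one can take $k_T$ supported in the ball of radius $T$, e.g. an explicit kernel whose transform concentrates near $s$ with $h_T(s)\asymp e^{2sT}$ (this is the usual device from de~George--Wallach and Sarnak--Xue). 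Expanding the automorphic kernel $K_T(x,y)=\sum_{\gamma\in\Gq}k_T(x,\gamma y)$ spectrally on $L^2(\Gq\bs\calH)$ and integrating against the diagonal gives
\[
\sum_{\lambda_t\in\spec(\Gq)} h_T(t)\,\abs{\phi_t(x)}^2 = \sum_{\gamma\in\Gq} k_T(x,\gamma x),
\]
for each fixed $x$, where the $\phi_t$ form an orthonormal basis of eigenfunctions.

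First I would isolate the single eigenvalue $\lambda_s$ (or the cluster near it): since every term on the left is non-negative, dropping all but the $\lambda_s$-terms and using $h_T(s)\gtrsim e^{2sT}$ yields
\[
e^{2sT}\sum_{t=s}\abs{\phi_t(x)}^2 \lesssim \sum_{\gamma\in\Gq} k_T(x,\gamma x) \lesssim \sup\abs{k_T}\cdot N_x(\Gq,2T),
\]
where $N_x$ counts $\gamma$ with $d(x,\gamma x)\le 2T$ (the support size of $k_T$ in the second variable). Integrating over $x$ in a fundamental domain, or more simply evaluating at $x=o$ and using that $\sum_{t=s}\abs{\phi_t(o)}^2$ can be bounded below by a constant times $m(\lambda_s,\Gq)$ after averaging $o$ over a fixed ball (to avoid accidental vanishing of eigenfunctions at a point — the standard trick is to integrate the identity over $x$ in a unit ball and use that an $L^2$-normalized eigenfunction has mass bounded below there, or alternatively to work with the full heat-kernel trace which directly produces $\sum_t h_T(t)m(\lambda_t,\Gq)$ on the left), we obtain
\[
m(\lambda_s,\Gq)\,e^{2sT} \lesssim N(\Gq,2T).
\]
Here one must be a little careful that $\sup\abs{k_T}$ and the $L^2$-lower-bound constant are absolute, independent of $\frakq$; this is true because $\Gamma$ is cocompact (or finite covolume with a uniform injectivity radius after the averaging), so the local geometry of $\Gq\bs\calH$ is uniform.

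Now plug in Theorem~\ref{t:counting}: $N(\Gq,2T)\lesssim \Vq^{-1}e^{4\rho T}+e^{2(2\rho-\alpha)T}$. Thus
\[
m(\lambda_s,\Gq) \lesssim \frac{e^{(4\rho-2s)T}}{\Vq} + e^{(4\rho-2\alpha-2s)T}.
\]
The two terms are balanced by choosing $T$ so that $e^{2\alpha T}\asymp \Vq$, i.e. $T=\frac{\log\Vq}{2\alpha}$, which gives
\[
m(\lambda_s,\Gq)\lesssim \Vq^{\frac{2\rho-2s}{2\alpha}}=\Vq^{\frac{\rho-s}{\alpha}},
\]
as claimed. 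The main obstacle is the bookkeeping in the spectral-side lower bound: ensuring that the multiplicity of $\lambda_s$ genuinely appears with an $\frakq$-uniform constant rather than being washed out by eigenfunctions that are small at the chosen base point, and that the test function $h_T$ can simultaneously be made non-negative on the whole spectrum (including the complementary series and the tempered axis $i\bbR$) while concentrating its mass at $s$ with the right exponential size — both are handled by the classical constructions, but they are where the care is needed. A secondary point is that for a clean statement one wants $T\ge$ some constant so that $e^{\rho T}$-type error terms from the trace formula (the contribution of the identity / small elements) are dominated; since we will ultimately take $\Vq\to\infty$ this is automatic.
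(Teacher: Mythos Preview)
Your overall strategy---pretrace formula, positivity of the spherical transform, then Theorem~\ref{t:counting}---is the same as the paper's. The gap is in how you bound the geometric side. Replacing $\sum_{\gamma}k_T(x,\gamma x)$ by $\sup|k_T|\cdot N_x(\Gq,2T)$ is too crude: for a kernel with non-negative transform (so that you may drop terms on the spectral side) the sup-norm is \emph{not} $O(1)$ but grows with $T$, and in any case the resulting exponents do not close. Indeed, your own displayed inequality $m\lesssim\Vq^{-1}e^{(4\rho-2s)T}+e^{(4\rho-2\alpha-2s)T}$, evaluated at $T=\tfrac{\log\Vq}{2\alpha}$, gives $\Vq^{(2\rho-\alpha-s)/\alpha}$, not $\Vq^{(\rho-s)/\alpha}$; the last line is an arithmetic slip that hides the problem (the two agree only when $\alpha=\rho$).

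What the paper actually uses is the \emph{pointwise decay profile} of the Sarnak--Xue test function $F=f\ast\check f$ with $f=\id_T\cdot\phi_s$: one has $\hat F(s)\asymp e^{4sT}$, $\hat F\ge 0$, and crucially $F(g)\lesssim e^{2sT}e^{-\rho t(g)}$ on its support $\{t(g)\le 2T\}$. Hence the geometric side is bounded not by the raw count $N(\Gq,2T)$ but by the weighted sum $e^{2sT}\sum_{t(\gamma)\le 2T}e^{-\rho t(\gamma)}$. Abel summation of this against Theorem~\ref{t:counting} yields $e^{2sT}\bigl(\Vq^{-1}e^{2\rho T}+e^{2(\rho-\alpha)T}\bigr)$, which is smaller than your expression by exactly a factor $e^{2\rho T}$. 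One then integrates the pretrace identity over a fixed compact set $\calF_0\subset\calF_1$ (so that the left side is $\gtrsim e^{4sT}\,m(\lambda_s,\Gq)/\Vq$, with constants independent of $\frakq$), multiplies by $\Vq$, and takes $T=\tfrac{\log\Vq}{2\alpha}$ to obtain $m\lesssim\Vq^{(\rho-s)/\alpha}$. The missing idea, then, is that the $e^{-\rho t}$ decay of the kernel must be kept and fed into the counting bound via partial summation; bounding by sup-norm times lattice-point count discards precisely the saving you need.
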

\begin{rem}
For $G=\SO(2,1)$ or $\SO(3,1)$, the Selberg Conjecture reads $\alpha=\rho$.
Assuming this, we recover the bounds of \eqref{e:SX} and \eqref{e:SX2}.
In general, however, $\alpha$ is much smaller then $\rho$ even conjecturally,
and Theorem \ref{t:multiplicity} is trivial when $s<\rho-\alpha$ is small.
That said, when $s$ is sufficiently close to $\rho$ our bound is comparable
to the conjectured bound in \eqref{e:SX} and suffices for our purposes.
\end{rem}
The final ingredient we need is the lower bound for the multiplicities of
new eigenvalues. For this we show
\begin{thm}\label{t:multiplicity2}
There is an ideal $\frakd$, depending on $\Gamma$, such that for all ideals
$\frakq$ prime to $\frakd$, all $\lambda\in \spec(\Gamma(\frakq))_{\rm{new}}$
satisfy
$$m(\lambda,\Gq)\gtrsim_\epsilon V(\frakq)^{\eta-\epsilon},$$
where $\eta$ is as in \eqref{e:eta}.
\end{thm}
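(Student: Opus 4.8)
The plan is to produce a large-dimensional subspace of $L^2(\Gq\bs\calH)_{\rm{new}}$ on which the Laplacian has a prescribed eigenvalue, by exploiting the action of the finite group $Q=\Delta/\Dq$ (or rather its image, a product of finite groups of Lie type over residue fields) on the new part of the spectrum. Concretely, given $\lambda\in\spec(\Gamma(\frakq))_{\rm{new}}$ with eigenfunction $\phi$, one considers the space spanned by the $\Gamma$-translates (equivalently, the $Q$-orbit) of $\phi$ inside $L^2(\Gq\bs\calH)$. This space carries a representation of the finite group $Q$, and because $\phi$ is \emph{new} — not invariant under any $\Gamma(\frakq')$ with $\frakq'\mid\frakq$ properly — the relevant representation of $Q$ cannot factor through any of the quotients coming from proper divisors. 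The key input is then a lower bound on the dimension of any such representation of $Q$: for finite simple (or quasisimple) groups of Lie type of rank $r$ over a field of size $q_v$, every nontrivial irreducible representation has dimension $\gtrsim q_v^{r}$ (the minimal faithful degree; cf. Landazuri--Seitz type bounds), and one must track how this compares to $|Q|\approx \Vq$. The exponent $\eta$ in \eqref{e:eta} should emerge precisely as the ratio (minimal nontrivial degree)$/|Q|$ in the logarithmic scale, depending on the absolute rank and dimension of $\bbG$, which explains the three cases.

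First I would reduce to the case of $\frakq=\frakp$ a single prime ideal (prime to the bad ideal $\frakd$, chosen so that $\bbG$ has good reduction and $\Gamma$ surjects onto $\Delta/\Dq$ at such primes — this surjectivity, a form of strong approximation with a bounded exceptional set, is where $\frakd$ comes from). For the general ideal one uses multiplicativity: $Q_\frakq\cong\prod_{\frakp\mid\frakq}Q_{\frakp^{e}}$, and a new eigenfunction at level $\frakq$ is new at each prime power factor, so its $Q_\frakq$-representation is a tensor product each factor of which is nontrivial, and dimensions multiply. Next, at a single prime, I would invoke Clifford theory: the $Q_\frakp$-module generated by $\phi$ contains an irreducible submodule; since $\phi$ is new this submodule is nontrivial on the quasisimple layer of $Q_\frakp=\bbG(\kp)$ (the point being that representations factoring through the center or through a proper quotient would force invariance under a larger congruence group, contradicting newness). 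Then apply the Landazuri--Seitz--Zalesskii lower bounds for the minimal degree of a nontrivial projective representation of $\bbG(\kp)$; combined with the fact that all the $Q$-translates of $\phi$ have the same Laplace eigenvalue $\lambda$ and lie in $L^2(\Gq\bs\calH)_{\rm{new}}$, this gives $m(\lambda,\Gq)\geq\dim(\text{that irreducible})\gtrsim q_v^{\,r(\bbG)}$, and bookkeeping $\Vq\approx|\bbG(\kp)|\approx q_v^{\dim\bbG}$ converts this into $\Vq^{\eta-\epsilon}$.

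The main obstacle I anticipate is the precise passage from "$\phi$ is new" to "the associated $Q$-representation is genuinely nontrivial on the right simple factors / has no small-dimensional constituents" — i.e.\ ruling out that $\phi$ generates a module all of whose irreducible constituents are of small dimension. This requires understanding which subgroups of $Q=\bbG(\OF/\frakq)$ arise as images of $\Gamma(\frakq')$ for $\frakq'\mid\frakq$ (these are the "congruence" subgroups between $\Gq$ and $\Gamma$), and showing that a representation trivial on none of the simple factors but of small degree would still force extra invariance — or alternatively, arranging $\frakd$ so that $Q_\frakp$ is quasisimple modulo a small center and then the only obstruction is the central characters, handled by passing to projective representations. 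A secondary technical point is verifying the surjectivity $\Gamma\twoheadrightarrow\Delta/\Dq$ away from $\frakd$ (strong approximation for the simply connected cover plus control of the finitely many primes where $\Gamma$ has defect), and checking that the new/old decomposition interacts correctly with the $Q$-action so that the whole $Q$-submodule generated by a new $\phi$ stays inside the new subspace. Finally, one should double-check the arithmetic of $\eta$ against $r(\bbG)$ and $\dim\bbG$ in each of the three regimes — in particular the jump in the $\SO(n,1)$ case between $n<6$ and $n\geq 6$, which presumably reflects that the Landazuri--Seitz bound for $\SO_n$-type groups improves once the rank is large enough that a better estimate than the ``generic'' one is available.
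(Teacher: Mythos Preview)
Your overall strategy matches the paper's exactly: reduce the multiplicity bound to a lower bound on the dimension of new irreducible representations of $\Gamma/\Gq\cong\GG(\OF/\frakq)$ (the isomorphism coming from Weisfeiler's strong approximation, which supplies the ideal $\frakd$), factor over prime powers, and at level $\frakp$ apply the Landazuri--Seitz--Zalesskii bounds to the finite group of Lie type $\GG(\kp)$. Your worries about Clifford theory and whether the $Q$-module generated by $\phi$ stays new are non-issues: the paper simply observes that if $\lambda$ is new then \emph{some} irreducible constituent of the $\Gamma/\Gq$-action on $V_\lambda(\Gq)$ is new, and one bounds $m(\lambda,\Gq)$ below by the dimension of that single constituent.

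The genuine gap is the prime-\emph{power} case $\frakp^r$ with $r\geq 2$. You write the Chinese-remainder factorization with factors $Q_{\frakp^e}$ but then immediately pass to ``a single prime'' and invoke Landazuri--Seitz, which applies only to $\GG(\kp)$, not to $\GG(\OF/\frakp^r)$. The latter groups are far from quasisimple (they have large unipotent congruence kernels), and their minimal faithful degrees are not covered by Landazuri--Seitz. The paper handles this by a separate argument: for $r\geq 2$ one restricts a new representation to the abelian normal subgroup $\GG(\frakp^{r-k})\cong\frakg(\OF/\frakp^k)$ (with $k=\lfloor r/2\rfloor$), identifies its dual with $\frakg$ via the Killing form, and bounds the dimension below by the size of a coadjoint orbit, i.e.\ by $|\GG(\OF/\frakp^k)|/|C_{\GG(\OF/\frakp^k)}(X)|$ for some nonzero $X\in\frakg(\kp)$. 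This in turn requires explicit bounds $\dim C_\frakg(X)\leq \dim G-2e$ for nilpotent and semisimple $X$ in the various types, worked out case by case. The upshot is a bound $\dim\rho\gtrsim|\kp|^{2ke}$, which for odd $r$ is only $|\kp|^{(r-1)e}\geq|\kp|^{2re/3}$. This loss of a factor $2/3$ is precisely why $\eta=\tfrac{2e}{3\dim G}$ rather than $e/\dim G$; your bookkeeping ``$\dim(\text{irreducible})\gtrsim q_v^{r(\bbG)}$ versus $|Q|\approx q_v^{\dim\bbG}$'' would yield $\tfrac{3}{2}\eta$, which is indeed the correct exponent for \emph{square-free} $\frakq$ (see the remark after the theorem) but not in general.
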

\begin{rem}
If we consider only square-free ideals we can remove the condition that
$\frakq$ be prime to $\frakd$ and at the same time slightly improve the bound,
replacing $\eta$ by $\tfrac{3}{2}\eta$.
\end{rem}
Theorem \ref{t:main} is now a direct consequence of
Theorems \ref{t:multiplicity} and \ref{t:multiplicity2}.
\begin{rem}
Using the conjectured bound (\ref{e:SX}) for the multiplicities,
instead of the bound established in Theorem \ref{t:multiplicity}, would allow
us to replace $\alpha\eta$ by $\rho\eta$ in Theorem \ref{t:main}.
We note that when $n$ is large, this gives very strong bounds on the spectrum.
In particular, for the orthogonal groups when $n\geq 16$, assuming
\eqref{e:SX} implies a spectral gap that is stronger then the best known
bounds even in the congruence case.
\end{rem}

\subsection*{Acknowledgements} 
We thank P.~Varj\'u for explaining his results.
The first author was partially supported by the NSF grant DMS-1001640.
The second author was partially supported by an NSERC Discovery Grant.

\section{Background and notation}
We write $A\lesssim B$ or $A=O(B)$ to indicate that $A\leq cB$
for some constant $c$.  If we wish to emphasize that constant depends on
some parameters we use subscripts, for example
$A\lesssim_\epsilon B$.  Note that all the implied constants in this paper
may depend on the group $\bbG$, which we consider fixed.  Sometimes they
only depend on the group $G$; in that case they are expressed via the
number $n$.  We also write $A\asymp B$ to indicate that
$A\lesssim B\lesssim A$. Finally, the cardinality of a finite set $S$
will be denoted $\# S$ or $|S|$.

\subsection{Lie theory}
$G$ will be an almost simple Lie group of real rank 1; $K\subset G$ will be
a maximal compact subgroup so that $\calH=G/K$ is a rank one symmetric space.
The Killing form on the Lie algebra $\frakg$ of $G$
induces a $G$-invariant Riemannian structure on $\calH$ with
the Riemannian measure $dx$. In particular, we will consider the cases of
$G=\SO(n,1)$ and $G=\SU(n,1)$ corresponding to real and complex hyperbolic
spaces respectively.

Let $G=NAK$ be an Iwasawa decomposition of $G$, with $A$ a maximal
diagonalizable subgroup and $N$ a maximal unipotent subgroup.
Let $\fraka$ denote the Lie algebra of $A$ and let $\fraka_\bbC^*$
denote its complexified dual (one dimensional since $G$ is of rank one).
Fix a positive Weyl chamber, denote by $\rho$ the half sum of the positive
roots, and fix a positive $X\in \fraka$ of norm one with respect to the
Killing form.  We now identify $\fraka_\bbC^*$ with $\bbC$ via their values
at $X$ and, slightly abusing notation, write $\rho=\rho(X)\in \bbR$ so that
$\rho=\frac{n-1}{2}$ in the case of $\SO(n,1)$ and $\rho=\tfrac n 2$ in the case of
$\SU(n,1)$.

We normalize the Haar measure $dk$ of $K$ to have total mass $1$ and the
Haar measure $dg$ on $G$ so that it projects to $dx$ on $\calH = G/K$.
We have the Cartan decomposition $G=KA^+K$ with $A^+$ the closure of
the positive Weyl chamber. Let $X\in \fraka$ be as above,
then any $g\in G$ can be expressed in the form
$g=k_1(g)\exp(t(g)X)k_2(g)$ with $k_j\in K$ and $t(g)=d(g o,o)\in \bbR^+$.
The Haar measure with respect to this decomposition is given by
$dg=D(t) dk_1 dt dk_2$ with $D(t)\asymp e^{2\rho t}$
(see \cite[Proposition 5.28]{Knapp86}).

\subsection{Spectral theory}
Denote by $\hat{G}$ the unitary dual of $G$ and by
$\hat{G}^1$ the spherical dual.
We parameterize $\hat{G}^1$ by $\fraka_\bbC^*/W$ where $W$ is the Weyl group.
With this parametrization the tempered representations lie in $i\fraka^*_\bbR$
and the non-tempered representations are contained in $(0,\rho]$.
We use the notation
\[\hat{G}^1=\{\pi_s|s\in i\bbR^+\cup [0,\rho]\},\]
where the representations $\pi_s, s\in i\bbR^+$ are the (tempered)
principal series representations, the representations
$\pi_s, s\in (0,\rho)$ are the (non-tempered) complementary series,
and $\pi_\rho$ is the trivial representation.

We recall the relation between Laplace eigenvalues and irreducible
representations.  For this let $\Omega$ be the Casimir operator of $G$, $\Omega$ acts on any irreducible representation $V_\pi$ by
scalar multiplication $\Omega v+\lambda(\pi)v=0$.
We may normalize $\Omega$ so that the restriction of $\Omega$ to the space
of right-$K$-invariant functions on $G$ coincides with
the Laplace-Beltrami operator $\lap$ on $\calH=G/K$.
With this normalization we have $\lambda(\pi_s)=\rho^2-s^2$.

\subsection{Decay of matrix coefficients}
For each $s\in i\bbR^+\cup [0,\rho]$ consider the \emph{spherical function}
defined by
\begin{equation}\label{e:spherical}
\phi_s(g)=\langle \pi_s(g)v,v\rangle
\end{equation}
where $v\in V_{\pi_s}$ is the normalized spherical vector.
The asymptotic behavior of this function is well-known.
For $s\in(0,\rho]$ the function $\phi_s$ is positive and decays like
\[\phi_s(\exp(tX))\asymp C_s e^{(s-\rho)t}.\]
For $s\in i\bbR$, $\phi_s$ is oscillatory and decays like
\[|\phi_{s}(\exp(tX))|\lesssim  t e^{-\rho t}.\]
As a consequence, we have the following bounds on general matrix
coefficients:
Let $\pi$ denote a unitary representation of $G$ on a Hilbert space $V_\pi$
such that all non-trivial non-tempered $\pi_s$ weakly contained in $V_\pi$
satisfy $s\in [0,\rho-\alpha]$. Then for any smooth spherical vector
$v\in V_\pi$,
\begin{equation}\label{e:decay}|\langle \pi(g)v,v\rangle| \leq C(\alpha) \norm{v}^2 e^{-\alpha t(g)}.\end{equation}

\section{Proof of Theorem \ref{t:counting}}
Since $\Gq\subset\Dq$, we have $N(\Gq;T)\leq N(\Dq;T)$,
and it is sufficient to bound the latter, that is to show that
\begin{equation}\label{e:counting}
\sharp \{\gamma\in \Dq|t(\gamma)\leq T\} \lesssim
\frac{e^{2\rho T}}{\Vq}+e^{(2\rho-\alpha)T}.
\end{equation}
It is easier to obtain this bound in a smooth form.  Accordingly for
$g,h\in G$ let
\[N(\Dq,g,h;T) =
\sharp \{\gamma\in \Dq|t(h^{-1}\gamma g)\leq T\}.\]
Fix a fundamental domain $\calF_1$ for $\Delta\bs G$ (containing the identity)
and let $B\subset \calF_1$ be a small fixed ball.
Let $\psi$ denote a smooth positive bi-$K$ invariant function on $G$ of
total mass one that is supported on $B$.
Consider the averaged counting function
\begin{equation}\label{e:defpsi}
\Psi(T)=\int_B\int_B N(\Dq,g,h;T)\psi(g)\psi(h) dgdh.
\end{equation}
Since $\psi$ is supported inside a fundamental domain $\calF_1$ it
is also supported in a fundamental domain $\calF_\frakq$ for
$\Dq\bs G$.  Let
$\tilde\psi(g)=\sum_{\gamma\in \Gq}\psi(\gamma g)$
denote the periodized function on $\Dq\bs G$.
\begin{lem}
\begin{equation}\label{e:Psi}
\Psi(T)=\int_G \id_T(g)\langle \Rq(g)\tilde\psi,\tilde\psi\rangle dg,
\end{equation}
where $\id_T(g)$ is the indicator function of the set $\{g\in G|t(g)\leq T\}$
and $\Rq$ is the right regular representation of $G$ on
$L^2(\Dq\bs G)$.
\end{lem}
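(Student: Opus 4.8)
The plan is to unfold the definition of $\Psi(T)$ and recognize it as a matrix coefficient of the right regular representation integrated against $\id_T$. First I would write out the inner double integral in \eqref{e:defpsi} by substituting the definition of $N(\Dq,g,h;T)$: this gives
\[
\Psi(T)=\int_B\int_B\sum_{\gamma\in\Dq}\id_T(h^{-1}\gamma g)\,\psi(g)\psi(h)\,dg\,dh.
\]
Since $\psi$ is bi-$K$-invariant and $\id_T$ depends only on $t(\cdot)=d(\cdot o,o)$, hence is itself bi-$K$-invariant, the integrand is insensitive to the relevant $K$-actions; the only structural fact I will actually use is the substitution $g\mapsto \gamma^{-1}g$-type manipulations below. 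Next I would interchange the (finite, or absolutely convergent) sum over $\gamma$ with the integrals and collect the sum into the periodization $\tilde\psi$.

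The key step is to fold the $\Delta(\frakq)$-sum into $\tilde\psi$. Because $\psi$ is supported in a fundamental domain $\calF_\frakq$ for $\Dq\bs G$, the translates $\{\psi(\gamma\cdot)\}_{\gamma\in\Dq}$ have disjoint supports, so $\tilde\psi$ is genuinely a function on $\Dq\bs G$ and $\|\tilde\psi\|_{L^2(\Dq\bs G)}^2=\|\psi\|_{L^2(G)}^2$. Writing $\langle \Rq(x)\tilde\psi,\tilde\psi\rangle=\int_{\Dq\bs G}\tilde\psi(yx)\overline{\tilde\psi(y)}\,dy$ and unfolding one copy of the periodization against the quotient integral turns this into $\int_G \tilde\psi(yx)\overline{\psi(y)}\,dy=\sum_{\gamma\in\Dq}\int_G\psi(\gamma^{-1}yx)\overline{\psi(y)}\,dy$. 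Then
\[
\int_G\id_T(x)\langle\Rq(x)\tilde\psi,\tilde\psi\rangle\,dx
=\sum_{\gamma\in\Dq}\int_G\int_G\id_T(x)\,\psi(\gamma^{-1}yx)\,\overline{\psi(y)}\,dy\,dx.
\]
Substituting $x\mapsto y^{-1}\gamma z$ (equivalently changing variables so that the argument of the first $\psi$ becomes $z$) and using that $\psi$ is real, the $x$-integral becomes $\int_G\id_T(y^{-1}\gamma z)\psi(z)\,dz$, and renaming $y,z$ as $h,g$ recovers exactly $\int_B\int_B\id_T(h^{-1}\gamma g)\psi(g)\psi(h)\,dg\,dh$ summed over $\gamma$, i.e.\ $\Psi(T)$. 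The point is that the domain of $h$ (resp.\ $g$) may be taken to be $B$ since $\psi$ is supported there.

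The main obstacle — really the only point requiring care — is bookkeeping the unfolding so that each $\gamma\in\Dq$ is counted exactly once and no boundary-of-fundamental-domain issues arise; here the hypothesis that $\supp\psi=B$ lies inside a single fundamental domain $\calF_\frakq$ for $\Dq\bs G$ is exactly what is needed, guaranteeing both that $\tilde\psi$ descends to the quotient without overlap and that the unfolding $\int_{\Dq\bs G}\to\int_G$ introduces no multiplicities. A secondary point is justifying the interchange of summation and integration: for fixed $T$ only finitely many $\gamma\in\Dq$ contribute to $N(\Dq,g,h;T)$ with $g,h$ in the bounded set $B$ (since $\Dq$ is discrete and $t(h^{-1}\gamma g)\le T$ confines $\gamma$ to a compact set), so the sum is finite and all Fubini-type interchanges are legitimate. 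With these points settled, \eqref{e:Psi} follows.
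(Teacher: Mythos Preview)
Your argument is correct and is essentially the paper's proof run in reverse: the paper starts from $\Psi(T)$, replaces $\psi$ by $\tilde\psi$ on $\calF_\frakq$, unfolds the $\gamma$-sum together with the $\calF_\frakq$-integral into a $G$-integral, and then changes variables $g\mapsto hg$ to recognize the matrix coefficient; you start from the matrix-coefficient side, unfold $\int_{\Dq\bs G}\to\int_G$ against one copy of $\psi$, and change variables to recover $\Psi(T)$. The substantive ingredients---the support hypothesis $\supp\psi\subset\calF_\frakq$ to make the unfolding bijective, and the substitution $x\mapsto y^{-1}\gamma z$ (equivalently the paper's $g\mapsto\gamma^{-1}g$ then $g\mapsto hg$)---are identical.
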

\begin{proof}
Since $\psi$ is supported on $B\subset \calF_\frakq$ we can replace the
integral over $B\times B$ in \eqref{e:defpsi} with an integral over
$\calF_\frakq\times\calF_\frakq$. Writing the counting function as
\[N(\Dq,g,h;T) =
\sum_{\gamma\in \Gq} \id_T(h^{-1}\gamma g),\]
we get
\[\Psi(T)=\int_{\calF_q}\int_{\calF_q}\left(\sum_{\gamma\in \Gq}\id_T(h^{-1}\gamma g)\right)\tilde\psi(g)\tilde\psi(h) dgdh.\]
Changing variables $g\mapsto \gamma^{-1}g$ and using the
$\Gq$-invariance of $\tilde{\psi}$ we get
\begin{eqnarray*}
\Psi(T)
&=& \int_{\calF_\frakq}\left(\sum_{\gamma\in \Gq}\int_{\gamma F_\frakq }\id_T(h^{-1}g)\tilde\psi(g)dg\right)\tilde\psi(h) dh\\
&=& \int_{\calF_\frakq}\left(\int_G \id_T(h^{-1}g)\tilde\psi(g)dg\right)\tilde\psi(h)dh.\\
\end{eqnarray*}
Making one more change of variables $g\mapsto hg$ and changing the order
of integration we get
\begin{eqnarray*}
\Psi(T)&=& \int_{\calF_\frakq}\left(\int_G \id_T(g)\tilde\psi(hg)dg\right)
     \tilde\psi(h)dh\\
&=& \int_G \id_T(g)\left(\int_{\calF_\frakq}\tilde\psi(hg)\tilde\psi(h)dh\right)dg\\
&=& \int_G \id_T(g)\langle \Rq(g)\tilde\psi,\tilde\psi\rangle dg.\\
\end{eqnarray*}
\end{proof}

We can now use the results about decay of matrix coefficients to
bound $\Psi(T)$:
\begin{lem}
$\Psi(T)\lesssim \frac{e^{2\rho T}}{\Vq}+ e^{(\rho-\alpha)T}.$
\end{lem}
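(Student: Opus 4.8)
The plan is to plug the decomposition of $L^2(\Dq\bs G)$ into irreducible representations into the matrix-coefficient formula \eqref{e:Psi}, and estimate the resulting integral using the pointwise decay bound \eqref{e:decay}.

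First I would split $L^2(\Dq\bs G) = \bbC\cdot\id \oplus L^2_0(\Dq\bs G)$, where the first summand is the constants and $L^2_0$ is its orthogonal complement. Writing $\tilde\psi = \tilde\psi_0 + c\cdot\id$ with $c = \frac{1}{\vol(\Dq\bs G)}\int \tilde\psi$, note that because $\psi$ has total mass one and is supported in a single fundamental domain for $\Dq\bs G$, we get $\int_{\Dq\bs G}\tilde\psi = 1$, so $c = \vol(\Dq\bs G)^{-1} \asymp \Vq^{-1}$ (up to $\vol(\Delta\bs G)$, which is fixed). Also $\norm{\tilde\psi}^2 = \int_{\Dq\bs G}|\tilde\psi|^2 \asymp \int_B \psi^2 \asymp 1$ since $\psi$ lives in one fundamental domain. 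Consequently $\langle \Rq(g)\tilde\psi,\tilde\psi\rangle = c^2\langle\Rq(g)\id,\id\rangle + \langle\Rq(g)\tilde\psi_0,\tilde\psi_0\rangle = c^2 + \langle\Rq(g)\tilde\psi_0,\tilde\psi_0\rangle$.

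For the constant part, $c^2 \int_G \id_T(g)\,dg = c^2 \int_{t(g)\le T} D(t)\,dk_1\,dt\,dk_2 \asymp \Vq^{-2}\int_0^T e^{2\rho t}\,dt \asymp \Vq^{-2}e^{2\rho T}$, which after noting $\Vq^{-2} \le \Vq^{-1}$ gives a contribution $\lesssim e^{2\rho T}/\Vq$. For the orthogonal part, the key input is Clozel's bound \eqref{e:sgcong}: since $\Dq$ is a congruence lattice, every non-trivial spherical representation $\pi_s$ weakly contained in $L^2_0(\Dq\bs G)$ has $s\in i\bbR \cup [0,\rho-\alpha]$. The vector $\tilde\psi_0$ is $K$-invariant (as $\psi$ is bi-$K$-invariant, so $\tilde\psi$ is left-$K$-invariant... one should be slightly careful: it is the right regular representation and we need right-$K$-invariance, which follows from $\psi$ being right-$K$-invariant) and smooth, so \eqref{e:decay} applies: $|\langle\Rq(g)\tilde\psi_0,\tilde\psi_0\rangle| \le C(\alpha)\norm{\tilde\psi_0}^2 e^{-\alpha t(g)} \lesssim e^{-\alpha t(g)}$. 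Therefore $\left|\int_G \id_T(g)\langle\Rq(g)\tilde\psi_0,\tilde\psi_0\rangle\,dg\right| \lesssim \int_0^T e^{-\alpha t} D(t)\,dt \asymp \int_0^T e^{(2\rho-\alpha)t}\,dt \asymp e^{(2\rho-\alpha)T}$. Hmm — this produces $e^{(2\rho-\alpha)T}$, whereas the lemma claims $e^{(\rho-\alpha)T}$; so I must instead estimate $\langle\Rq(g)\tilde\psi_0,\tilde\psi_0\rangle$ more carefully, using that $\tilde\psi_0$ is not just spherical but is an averaged/localized vector — one exploits the convolution structure $\Rq(\psi)$ acting, so that the matrix coefficient is really $\langle\Rq(g)\Rq(\psi)v,\Rq(\psi)v\rangle$ type, and spherical function decay $\phi_s(\exp tX)\asymp e^{(s-\rho)t}$ with $s\le \rho-\alpha$ gives $e^{-\alpha t}\cdot e^{-\rho t}$ worth of decay after accounting for the smoothing, wait — actually the cleanest route: bound $\langle\Rq(g)\tilde\psi_0,\tilde\psi_0\rangle$ by $\int\int \psi(g_1)\psi(g_2)|\Phi(g_1^{-1}g g_2)|\,dg_1 dg_2$ where $\Phi$ is the relevant spherical-type function on $L^2_0$, and use that $\Phi$ decays like the spherical function $\phi_{\rho-\alpha}(\exp tX) \asymp e^{-\alpha t - \rho t}$...

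The main obstacle, then, is getting the right exponent: a naive application of \eqref{e:decay} to $\tilde\psi_0$ as a mere spherical vector only yields $e^{(2\rho-\alpha)T}$, and one genuinely needs the extra factor $e^{-\rho T}$ coming from the $K$-finiteness/sphericity refinement — i.e.\ that for spherical vectors the matrix coefficient decays like $\phi_s(g) \asymp e^{(s-\rho)t(g)}$ rather than merely $e^{-\alpha t(g)}$, so that $|\langle\Rq(g)\tilde\psi_0,\tilde\psi_0\rangle| \lesssim e^{-(\rho-\alpha)t(g)} \cdot e^{-\rho t(g)}$, hmm, that gives $e^{-(2\rho - \alpha)t}$ which when integrated against $D(t)\asymp e^{2\rho t}$ gives $\int_0^T e^{\alpha t}dt \asymp e^{\alpha T}$ — too small. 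Let me reconsider: the correct statement must be $|\langle\Rq(g)\tilde\psi_0,\tilde\psi_0\rangle| \lesssim \phi_{\rho-\alpha}(g) \asymp e^{-\alpha t(g)}$ when properly normalized (the $e^{(s-\rho)t}$ with $s = \rho-\alpha$ is exactly $e^{-\alpha t}$), and the gain to $e^{(\rho-\alpha)T}$ over $e^{(2\rho-\alpha)T}$ must come instead from a more careful treatment of the geometry — I would decompose the spectrum of $L^2_0$ further into the tempered part (where $\phi_s$ decays like $te^{-\rho t}$, integrating to $\lesssim T^2 e^{\rho T} \lesssim e^{(\rho-\alpha)T}$ for $\alpha$ small, or absorbing the $T$) and the complementary part with $s\le\rho-\alpha$. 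The honest resolution of this exponent discrepancy is the crux, and I would resolve it by invoking \eqref{e:decay} together with the fact that $\tilde\psi_0 = \Rq(\psi)\tilde\psi_0$ essentially, so one applies decay to $\Rq(\psi)$-smoothed vectors and picks up an additional $e^{-\rho t}$ via a second Cauchy–Schwarz / Sobolev argument — precisely the standard trick in \cite{BourgainKontorovichSarnak10, GorodnikNevo09} referenced in the introduction. I would carry out: (1) spectral decomposition and identification of $c$ and $\norm{\tilde\psi}$; (2) the constant-term computation; (3) the orthogonal-term bound via \eqref{e:decay} applied to suitably smoothed vectors; (4) combining and integrating against $D(t)\asymp e^{2\rho t}$.
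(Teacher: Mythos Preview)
Your approach is exactly the paper's: split off the constant function, bound the orthogonal matrix coefficient by \eqref{e:decay} using the congruence spectral gap \eqref{e:sgcong}, and integrate against the Haar density $D(t)\asymp e^{2\rho t}$. Your computation giving $e^{(2\rho-\alpha)T}$ for the orthogonal contribution is correct and is precisely what the paper obtains; the exponent $(\rho-\alpha)$ in the displayed statement of the lemma is a typo. Indeed, the final line of the paper's own proof reads
\[
\Psi(T)\lesssim \int_0^T\Bigl(\frac{1}{\Vq}+e^{-\alpha t}\Bigr)e^{2\rho t}\,dt\lesssim \frac{e^{2\rho T}}{\Vq}+ e^{(2\rho-\alpha)T},
\]
and this is the bound that is actually used downstream (it matches the statement of Theorem~\ref{t:counting}). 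So the ``obstacle'' you identify is a phantom, and the entire second half of your proposal --- the attempt to squeeze out an extra $e^{-\rho t}$ via a further smoothing or Sobolev trick --- is unnecessary and should be deleted.

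One small slip in your constant-term computation: you wrote $c^2\langle \Rq(g)\id,\id\rangle = c^2$, but $\langle\id,\id\rangle_{L^2(\Dq\bs G)} = \vol(\Dq\bs G)$, so this equals $c^2\vol(\Dq\bs G)=c\asymp \Vq^{-1}$, not $c^2\asymp\Vq^{-2}$. (You noticed the discrepancy and patched it with $\Vq^{-2}\le\Vq^{-1}$, but the correct constant is simply $1/\Vq$, exactly as in the paper.) With that correction your argument is complete and coincides with the paper's.
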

\begin{proof}
Decompose $\tilde\psi=\psi_0+\psi^\bot$,
with
$\psi_0=\frac{1}{\vol(\Dq\bs G)}1$ the projection of $\tilde\psi$ onto the constant function and $\psi_0^\bot\in L^2_0(\Dq\bs G)$ its orthogonal complement. We then have
\[\langle \Rq(g)\tilde\psi,\tilde\psi\rangle=||\psi_0||^2+\langle \Rq(g)\psi_0^\bot,\psi_0^\bot\rangle.\]
Since $\Dq$ is a congruence group, all non-tempered $\pi_s\in \hat{G}^1$ (weakly) contained in $L^2_0(\Dq\bs G)$ satisfy $s\in [0,\rho-\alpha)$. Consequently, we get that
\[\langle \Rq(g)\psi^\bot,\psi^\bot \rangle\lesssim ||\psi^\bot||^2 e^{-\alpha t(g)}.\]
We have $||\psi_0||^2=\frac{1}{\vol(\Dq\bs G)}\asymp \frac{1}{\Vq}$ and we can bound $||\psi^\bot||^2\leq ||\tilde\psi||^2=\int_B|\psi(g)|^2dg$ which is independent on $q$. Plugging this in \eqref{e:Psi} gives
\[\Psi(T)\lesssim \int_0^T(\frac{1}{\Vq}+e^{-\alpha t})e^{2\rho t}dt\lesssim \frac{e^{2\rho T}}{\Vq}+ e^{(2\rho-\alpha)T}.\]
\end{proof}

We can now conclude the proof of (\ref{e:counting}), and hence of Theorem \ref{t:counting}. Since on average
\[\Psi(T)=\int_B\int_B N(T,\Dq;g,h)\psi(g)\psi(h) dgdh\lesssim \frac{e^{2\rho T}}{\Vq}+ e^{(2\rho-\alpha)T},\]
there is a point $(g,h)\in B\times B$ for which
\[N(T,\Dq;g,h)\lesssim \frac{e^{2\rho T}}{\Vq}+ e^{(2\rho-\alpha)T}.\]
Now, since $B$ is compact, there is some $\delta=\delta(B)$ (not depending on $\frakq$) such that for any $h,g\in B$
\[t(\gamma)-\delta\leq t(h^{-1}\gamma g)\leq t(\gamma)+\delta.\]
Replacing $T$ by $T\pm \delta$ (which only affects the implied constant) we get (\ref{e:counting}).

\section{Proof of Theorem \ref{t:multiplicity}}
Theorem \ref{t:multiplicity} follows from Theorem \ref{t:counting}
by the same arguments as in the proof of \cite[Theorem 3]{SarnakXue91}
(and \cite{HuntleyKatznelson93} for the non-compact case).
For the sake of completeness we include the details below.

Fix an eigenvalue $\lambda=\rho^2-s^2$ and let $V_\lambda(\Gq)$ denote
the corresponding eigenspace.  Fix an orthonormal basis $\psi_1,\ldots,\psi_m$
for $V_\lambda(\Gq)$ and consider the Bergman function
\[B(x;\frakq,\lambda)=\sum_{j=1}^m |\psi_j(x)|^2.\]
Note that
$$B(x;\frakq,\lambda)=\sup_{\psi\in V_\lambda(\Gq)} \frac{|\psi(x)|^2}{\norm{\psi}^2},$$
is independent on the choice of basis. Since the left action of
$\Gamma$ preserves $V_\lambda(\Gq)$, the function $B(x;\frakq,\lambda)$
is $\Gamma$ invariant and
\[\int_{\calF_1}B(x;\frakq,\lambda)dx=\frac{m(\lambda,\frakq)}{\Vq}.\]
In the case of non-compact $\calF_1$, \cite[Lemma 2.1]{HuntleyKatznelson93}
obtains a compact neighborhood of the identity $\calF_0\subset \calF_1$
(independent of $\frakq$) such that
\begin{equation}\label{e:collar}
\int_{\calF_0}B(x;\frakq,\lambda)dx\gtrsim \frac{m(\lambda,\frakq)}{\Vq}.
\end{equation}
uniformly for $\lambda\leq \rho^2-1$,
where the implied constant depends only on $\calF_0$.

Let $f(g)=\id_T(g)\phi_s(g)$ with $\id_T$ as above and $\phi_s$ the spherical function defined in (\ref{e:spherical}). Let $F=f*\check{f}$ where $\check{f}(g)=\bar{f}(g^{-1})$.  By \cite[Lemma 2.1]{SarnakXue91} the function $F(g)$ is bi-$K$ invariant and satisfies
\begin{equation} F(g)\lesssim\left\lbrace\begin{array}{cc} e^{2sT}e^{-\rho t(g)} & \mbox{ if } t(g)\leq 2T\\
0 & \mbox{ if } t(g)> 2T\end{array}\right.\end{equation}
Its spherical transform is given by $\hat{F}(s)=|\hat{f}(s)|^2$ where
\begin{eqnarray*}
\hat{f}(s)=\int_G \id_T(g)|\phi_s(g)|^2dg\asymp \int_0^{T} e^{2\rho t} e^{2(s-\rho)t} dt\asymp e^{2sT},
\end{eqnarray*}
so that $\hat{F}(s)\asymp e^{4sT}$.

Identifying $\Gq\bs \calH=\Gq\bs G/K$ and using the pre-trace formula we have
\[\sum_{\gamma\in \Gq}F(x^{-1}\gamma x)=\sum_{k} \hat{F}(s_k)|\psi_k(x)|^2 +\mathcal{E},\]
where the sum on the right is over a basis of Laplacian eigenfunctions in $L^2(\Gq\bs \calH)=L^2(\Gq\bs G/K)$ with eigenvalue $\lambda_{k}=\rho^2-s_k^2$ and $\mathcal{E}$ corresponds to the contribution of the continuous spectrum (this is a finite sum of integrals of $\hat{F}$ against Eisenstein series).
From positivity of $\hat{F}$ we can bound
\[\hat{F}(s)B(x;\frakq,\lambda_s)\leq \sum_{\gamma\in \Gq}F(x^{-1}\gamma x).\]
From compactness of $\calF_0$ there is $\delta$ depending only on $\calF_0$ (not on $\frakq$) such that
\[t(\gamma)-\delta\leq t(x^{-1}\gamma x)\leq t(\gamma)+\delta,\]
for all $x\in \calF_0$. We thus get that
\begin{eqnarray*}
\hat{F}(s)B(x;\frakq,\lambda_s)
&\lesssim& e^{2sT}\mspace{-18.0mu}\sum_{\substack{\gamma\in\Gq \\ t(x^{-1}\gamma x)\leq 2T}} e^{-\rho t(x^{-1}\gamma x)}\\
&\lesssim& e^{2sT}\sum_{\substack{\gamma\in\Gq \\ t(\gamma)\leq 2T}}e^{-\rho t(\gamma)},
\end{eqnarray*}
uniformly on $\calF_0$. Integrating by parts, and using Theorem \ref{t:counting} we get
\begin{eqnarray*}
\hat{F}(s)B(x;\frakq,\lambda_s) &\lesssim & e^{2sT}\int^{2T} e^{-\rho t}N(\Gq;t)dt\\
&\lesssim&  e^{2sT}\int^{2T} e^{-\rho t}(\frac{e^{2\rho t}}{\Vq}+e^{(2\rho-\alpha)t})dt\\
&\lesssim& e^{2sT}(\frac{e^{2\rho T}}{\Vq}+e^{2(\rho-\alpha)T}).
\end{eqnarray*}
Dividing by $\hat{F}(s)\asymp e^{4sT}$ and integrating over $\calF_0$ we get
\begin{eqnarray*}\frac{m(\lambda_s,\Gq)}{\Vq}\lesssim \int_{\calF_0}B(x;\frakq,\lambda_s)dx
\lesssim e^{-2sT}(\frac{e^{2\rho T}}{\Vq}+e^{2(\rho-\alpha)T}).
\end{eqnarray*}
Now multiply by $\Vq$ and take $T=\frac{\log(\Vq)}{2\alpha}$ to get $m(\lambda_s,\Gq)\lesssim \Vq^{\frac{(\rho-s)}{\alpha}}$.

\section{Proof of Theorem \ref{t:multiplicity2}.}\label{s:eG}
The lower bound for the multiplicities of eigenvalues in
$\spec(\Gq)_{\rm{new}}$ follows from a lower bound on the dimensions
of irreducible representations of the finite groups $\Gq\bs\Gamma$.
We say that a representation of $\Gq\bs\G$ is \emph{new}
if it does not factor through one of the quotients $\Gqp\bs\G$, for any proper
divisor $\frakq'$ of $\frakq$; equivalently, we say that a representation of
$\Gamma$ is of level $\frakq$ if it factors through a new representation
of $\Gq\bs\G$.

Since $\Gq$ is normal in $\Gamma$, the left action of $\Gamma$ preserves
$L^2(\Gamma(\frakq)\bs \calH)$ and commutes with the Laplacian.
We thus get a representation of $\Gamma(\frakq)\bs \Gamma$ on each
eigenspace $V_\lambda(\Gq)$.
Moreover, $\lambda\in \spec(\Gq)$ belongs to $\spec(\Gq)_{\textrm{new}}$
iff there is at least one invariant subspace in $V_\lambda(\Gq)$ on which
$\Gamma/\Gq$ acts via a new representation.
Consequently, we can reduce Theorem \ref{t:multiplicity2} to the following
result regarding the irreducible representations of $\Gq\bs\Gamma$.

\begin{prop}\label{p:irrep1}
There is an ideal $\frakd$ depending on $\Gamma$ such that all
irreducible representations of level $\frakq$ prime to $\frakd$ satisfy
\[\dim(\rho)\gtrsim_\epsilon \Vq^{\eta-\epsilon}.\]
\end{prop}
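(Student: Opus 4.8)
The plan is to bound from below the dimension of an irreducible representation $\rho$ of $\Gamma/\Gamma(\frakq)$ of level $\frakq$ by exploiting the structure of the finite group $\Gamma/\Gamma(\frakq)$, which by strong approximation is controlled by the reductions $\bbG(\OF/\frakq)$ (at least for $\frakq$ prime to a suitable bad ideal $\frakd$ accounting for the finite index of $\Gamma$ in $\bbG(\OF)$, for places where $\bbG$ fails to be smooth or reductive, and for small primes). First I would use the Chinese Remainder Theorem to reduce to the case $\frakq = \frakp^k$ a prime power: for $\frakq = \prod_i \frakp_i^{k_i}$ the group $\Gamma/\Gamma(\frakq)$ is (a subgroup of finite index, bounded independently of $\frakq$, in) the product of the $\Gamma/\Gamma(\frakp_i^{k_i})$, a new irreducible representation is an external tensor product of new irreducibles of the factors, and dimensions and orders multiply; so it suffices to prove $\dim(\rho)\gtrsim_\epsilon N(\frakp^k)^{\eta-\epsilon}$ for prime-power level, with $\Vq$ replaced by $[\Gamma:\Gamma(\frakp^k)]\asymp N(\frakp)^{k\dim\bbG}$.

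Next I would split into two regimes. For $k=1$ (level a prime $\frakp$), the group is essentially a finite group of Lie type $\bbG(\mathbf{k}_\frakp)$ over the residue field, and the minimal dimension of a nontrivial irreducible representation of such a group is known to grow like a positive power of $|\mathbf{k}_\frakp|$ — concretely by the Landazuri–Seitz–Zalesskii bounds (or the Frobenius–Schur/Gelfand–Graev machinery), the minimal degree is $\gtrsim |\mathbf{k}_\frakp|^{r}$ for an explicit $r$ depending on the type; here $\bbG$ has relative rank one over $F$ but may have larger absolute rank, and one must track which finite group of Lie type actually arises from $\bbG(\OF/\frakp)$ and insert the corresponding exponent to produce the $\eta(G)$ in \eqref{e:eta}. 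For higher prime powers $k\geq 2$ one uses the filtration of the congruence kernel $\Gamma(\frakp)/\Gamma(\frakp^k)$ by the abelian layers $\Gamma(\frakp^j)/\Gamma(\frakp^{j+1})\cong \Lie(\bbG)(\mathbf{k}_\frakp)$: a level-$\frakp^k$ representation is nontrivial on $\Gamma(\frakp^{k-1})/\Gamma(\frakp^k)$, so by Clifford theory its restriction to this abelian normal subgroup contains a nontrivial character, whose stabilizer in $\Gamma/\Gamma(\frakp^k)$ has index at least the size of the orbit; estimating this orbit (the coadjoint orbit of a nonzero functional on $\Lie(\bbG)(\mathbf{k}_\frakp)$, whose size is again a definite power of $|\mathbf{k}_\frakp|$, at least $|\mathbf{k}_\frakp|^{\dim\bbG - \mathrm{rank}}$ generically and controlled in general) gives the desired lower bound, which for $k\geq 2$ is actually stronger than for $k=1$. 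Assembling: $\dim(\rho)\gtrsim N(\frakq)^{\eta'}$ for $\eta'$ at least the $k=1$ exponent, and rewriting in terms of $\Vq = N(\frakq)^{\dim\bbG}$ (up to the bounded index factor) yields $\dim(\rho)\gtrsim_\epsilon \Vq^{\eta-\epsilon}$, the $\epsilon$ and the $\frakd$-coprimality absorbing the finitely many degenerate primes, the index $[\bbG(\OF):\Gamma]$, and lower-order terms.

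The main obstacle is the precise bookkeeping at $k=1$: one must identify the exact finite groups of Lie type occurring as $\bbG(\OF/\frakp)$ for $\bbG/F$ an $F$-form of $\SO(n,1)$ or $\SU(n,1)$ (which depends on how $\bbG$ splits at $\frakp$ — split, quasi-split, or involving a norm-one torus of a quadratic extension in the unitary case), and then extract from the Landazuri–Seitz–Zalesskii-type tables the smallest representation degree, so that the resulting exponent matches the $\eta(G)$ of \eqref{e:eta} — in particular explaining the case division at $n=6$ for $\SO(n,1)$ and the uniform formula for $\SU(n,1)$. A secondary technical point is the strong approximation step needed to guarantee that $\Gamma/\Gamma(\frakq)$ really is (commensurably, with bounded defect) the expected adelic quotient for $\frakq$ prime to $\frakd$; this is where the dependence of $\frakd$ on $\Gamma$ enters. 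The remark about square-free levels and the improved constant $\tfrac32\eta$ would follow by noting that for square-free $\frakq$ only the $k=1$ analysis is needed and one can afford a slightly more favorable comparison between $\dim\bbG$ and the representation-degree exponent.
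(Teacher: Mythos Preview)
Your overall architecture matches the paper's proof exactly: strong approximation to identify $\Gamma/\Gamma(\frakq)$ with $\GG(\OF/\frakq)$ away from a bad ideal $\frakd$, Chinese Remainder Theorem to reduce to prime-power level (the $\epsilon$ indeed absorbs the divisor-function loss $2^{\omega(\frakq)}\lesssim_\epsilon N(\frakq)^\epsilon$), Landazuri--Seitz--Zalesskii for the prime case, and Clifford theory plus coadjoint orbits for prime powers. The identification of the finite groups of Lie type arising and the use of $V(\frakp^r)\asymp|\kp|^{r\dim\GG}$ are also as in the paper.

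There is, however, a real gap in your prime-power step. You restrict $\rho$ to the \emph{last} abelian layer $\Gamma(\frakp^{k-1})/\Gamma(\frakp^k)\cong\frakg(\kp)$ and bound $\dim\rho$ below by the orbit of a nontrivial character there. But the conjugation action of $\Gamma/\Gamma(\frakp^k)$ on that single layer factors through $\GG(\kp)$ (since $[\Gamma(\frakp),\Gamma(\frakp^{k-1})]\subset\Gamma(\frakp^k)$), so any such orbit has size at most $|\GG(\kp)|$, and in fact the bound you get is $\dim\rho\gtrsim|\kp|^{2e}$ \emph{independently of $k$}. This suffices for $k=2,3$ but fails for $k\geq 4$: the target is $\dim\rho\gtrsim|\kp|^{2ke/3}$, which grows with $k$. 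Your assertion that ``for $k\geq 2$ [the bound] is actually stronger than for $k=1$'' is true only for small $k$.

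The paper repairs this by working not with the last layer but with the \emph{largest} abelian congruence kernel, namely $\GG(\frakp^{\lceil r/2\rceil})\cong\frakg(\calO/\frakp^{[r/2]})$. The Clifford orbit is then computed inside $\GG(\calO/\frakp^{[r/2]})$, and the centralizer of the relevant $X\in\frakg(\calO/\frakp^{[r/2]})$ (with $X\not\equiv 0\pmod\frakp$, guaranteed by newness) is bounded layer by layer as $|C_{\GG(\calO/\frakp^{[r/2]})}(X)|\leq |C_{\GG(\kp)}(X)|\,|C_{\frakg(\kp)}(X)|^{[r/2]-1}$. Combined with $|C_{\frakg(\kp)}(X)|\leq|\kp|^{\dim\GG-2e}$ this yields $\dim\rho\gtrsim|\kp|^{2e[r/2]}\geq|\kp|^{2re/3}$, which is the bound you need. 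So the missing idea is to take the half-level abelian normal subgroup rather than the top graded piece.
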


\subsection{Reduction to prime powers}
The Strong Approximation Theorem of Weisfeiler \cite[Thm.\ 1]{Weisfeiler84}
(together with the Borel Density Theorem) show that there exists an
ideal $\frakd_1$ in $\OF$ and a group scheme $\GG$ defined over the
localization of $\OF$ away from $\frakd_1$ such that $\GG_F = \bbG$,
and such that for all $\frakq$ prime to $\frakd_1$ there exists an isomorphism
$$\Gq\bs \Gamma \cong \GG(\OF/\frakq) \cong \prod_{j=1}^{\omega(\frakq)} \GG(\OF/\frakp_j^{r_j}),$$
where $\frakq = \prod_{j} \frakp_j^{r_j}$ is the prime factorization of $\frakq$.  It immediately follows that
$$V(\frakq)=|\Gq\bs \Gamma|=\prod_{j=1}^{\omega(\frakq)}|\GG(\OF/\frakp_j^{r_j})|.$$

Further, any irreducible representation $\rho$ of $\GG(\OF/\frakq)$
is isomorphic to a tensor product of irreducible representations
$\rho_j$ of $\GG(\OF/\frakp_j^{r_j})$.  We thus also have
$$\dim(\rho)=\prod_{j=1}^{\omega(\frakq)} \dim(\rho_j). $$
Finally, $\rho$ is of level $\frakq$ if and only if the $\rho_j$ are of
levels $\frakp_j^{r_j}$.
Since $|\GG(\OF/\frakp^r)|\asymp |\kp|^{r\dim(G)}$ and the number
$\omega(\frakq)$ of prime divisors of $\frakq$ satisfies
$2^{\omega(\frakq)}\lesssim_\epsilon |\OF/\frakq|^\epsilon$,
we can reduce Proposition \ref{p:irrep1} to the following result
regarding the irreducible new representations of $\GG(\OF/\frakp^r)$.
\begin{prop}\label{p:irrep2}
There is an ideal $\frakd$ (contained in $\frakd_1$) such that for all prime ideals $\frakp$
not dividing $\frakd$, any new representation $\rho$ of $\GG(\OF/\frakp^r)$
satisfies
\begin{equation*}
\dim\rho\gtrsim |\kp|^{\frac{2re}{3}},
\end{equation*}
where $\kp=\OF/\frakp$ is the corresponding residue field and
\begin{equation}\label{e:eG}
e=e(\GG)=\frac{3\eta\dim(\GG)}{2}=\left\lbrace\begin{array}{cc}
1 & \GG(\bbR)\cong \SO(n,1),\;n<6\\
n-2 & \GG(\bbR)\cong \SO(n,1),\;n\geq 6\\
  n & \GG(\bbR)\cong \SU(n,1)\\
  \end{array}\right.
\end{equation}
\end{prop}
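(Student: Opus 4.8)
\textbf{Proof plan for Proposition \ref{p:irrep2}.}
The plan is to analyse the irreducible representations of the finite group $\GG(\OF/\frakp^r)$ through the filtration by principal congruence subgroups. Write $K_i=\ker(\GG(\OF/\frakp^r)\to\GG(\OF/\frakp^i))$ for $0\le i\le r$, so that $K_0=\GG(\OF/\frakp^r)$, $K_r=1$, one has $[K_i,K_j]\subseteq K_{i+j}$, and each successive quotient $K_i/K_{i+1}$ is canonically identified with the residual Lie algebra $\frakg(\kp)$, where $\frakg=\Lie(\GG)$. By definition a representation $\rho$ has level exactly $\frakp^r$ (is \emph{new}) if and only if $\rho|_{K_{r-1}}$ is non-trivial. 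Before starting I would enlarge the bad ideal $\frakd$ (still depending only on $\GG$) so that for $\frakp\nmid\frakd$ the scheme $\GG$ is smooth with semisimple fibres of the expected type, the residue characteristic is prime to the order of the Weyl group, to $n+1$, and to the discriminant of the relevant quadratic or Hermitian form (so that the Killing form on $\frakg$ is non-degenerate mod $\frakp$), and the truncated exponential identifies the layers $K_i/K_{i+2}$ with the corresponding Lie-algebra layers. These normalisations make available Clifford theory for $\GG(\OF/\frakp^r)$ together with a coadjoint-orbit description of the relevant induced representations.

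First I would treat $r=1$. Here $\GG(\kp)$ is, up to a centre and up to passing to a product of Galois-conjugate factors over an extension of $\kp$, a finite simple group of Lie type --- of type $A_n$ for $\SU(n,1)$ and of type $B_{(n+1)/2}$ or $D_{(n+1)/2}$ for $\SO(n,1)$. The Landazuri--Seitz lower bounds on the smallest dimension of a non-trivial irreducible representation then give $\dim\rho\gtrsim|\kp|^{e}$ in every case, which is much stronger than the required $|\kp|^{2e/3}$; this sharper $r=1$ estimate is precisely what produces the improved exponent $\tfrac32\eta$ quoted for square-free levels.

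For $r\ge2$ I would run Clifford theory on an abelian congruence layer. Put $m=\lceil r/2\rceil$, so that $K_m$ is abelian (since $[K_m,K_m]\subseteq K_{2m}=1$) and normal in $\GG(\OF/\frakp^r)$. As $K_{r-1}\subseteq K_m$, a new $\rho$ is non-trivial on $K_m$, hence $\rho|_{K_m}$ contains a character $\chi$ whose restriction to $K_{r-1}$ is still non-trivial. Using the trace form one identifies $\widehat{K_m}$ with $\frakg^*(\OF/\frakp^{\lfloor r/2\rfloor})$, the action of $\GG(\OF/\frakp^r)$ on $\widehat{K_m}$ factoring through the coadjoint action of $\GG(\OF/\frakp^{\lfloor r/2\rfloor})$; the requirement that $\chi$ be non-trivial on $K_{r-1}$ says exactly that the reduction $\bar\xi\in\frakg^*(\kp)$ of the functional $\xi$ attached to $\chi$ is non-zero. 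By Clifford's theorem $\rho$ is induced from the stabiliser of $\chi$, so $\dim\rho\ge[\GG(\OF/\frakp^r):\mathrm{Stab}(\chi)]=|\calO_\xi|$, the size of the coadjoint orbit of $\xi$. Filtering $\mathrm{Stab}(\chi)$ by the $K_i$, one checks that $K_{\lfloor r/2\rfloor}\subseteq\mathrm{Stab}(\chi)$ while each lower graded piece $(\mathrm{Stab}(\chi)\cap K_i)/(\mathrm{Stab}(\chi)\cap K_{i+1})$ embeds (via $\mathrm{ad}^*$) into the centraliser $\mathfrak{z}(\bar\xi)\subseteq\frakg(\kp)$; comparing cardinalities yields
\[ |\calO_\xi|\;\gtrsim\;|\kp|^{\lfloor r/2\rfloor\,\dim\calO(\bar\xi)}\;\ge\;|\kp|^{\lfloor r/2\rfloor\,d}, \]
where $d=d(\GG)$ is the minimal dimension of a non-zero adjoint orbit in $\frakg$ over $\overline{\kp}$, and for $r=2,3$ one simply takes $\lfloor r/2\rfloor=1$ here (using only the reduction $\bar\xi$).

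It then remains to collect the numerology. The minimal non-zero adjoint orbit is the orbit of a highest root vector; its dimension is $2n$ for type $A_n$ and $2(n-2)$ for $\frakg=\Lie(\SO_{n+1})$ when $n\ge6$, so in those ranges $d=2e$, while for $\SO(n,1)$ with $2\le n<6$ one has $d\ge2=2e$. Inserting this into the displayed inequality gives $\dim\rho\gtrsim|\kp|^{2e\lfloor r/2\rfloor}$, and since $\lfloor r/2\rfloor\ge r/3$ for all $r\ge2$ this is $\gtrsim|\kp|^{2re/3}$; together with the $r=1$ case this is the assertion. I expect the main obstacle to lie in the $r\ge2$ step: one must set up Clifford theory and the coadjoint-orbit dictionary for the \emph{reductive} (not unipotent) groups $\GG(\OF/\frakp^r)$ uniformly in all $\frakp\nmid\frakd$ and all $r$ --- controlling, in particular, the growth of $\mathrm{Stab}(\chi)$ along the congruence filtration and the behaviour of the low-rank exceptional isogenies ($\frakg\cong\mathfrak{sl}_2\oplus\mathfrak{sl}_2$, $\mathfrak{sp}_4$, $\mathfrak{sl}_4$ for $\SO(n,1)$, $n\le5$) in the $r=1$ recognition step --- and it is this passage, rather than the final numerical check, that forces the excluded set $\frakd$ and accounts for the factor $\tfrac23$ lost relative to the sharp $r=1$ bound.
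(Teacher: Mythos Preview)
Your proposal is correct and follows essentially the same route as the paper: Landazuri--Seitz for $r=1$, then for $r\ge2$ Clifford theory with respect to the abelian congruence layer $K_{\lceil r/2\rceil}$, identification of its dual with $\frakg(\calO/\frakp^{\lfloor r/2\rfloor})$ via the Killing form, and bounding the stabiliser layer-by-layer by centralisers in $\frakg(\kp)$. The only cosmetic difference is that you invoke the known dimension of the minimal non-zero adjoint orbit (the highest-root-vector orbit) to get $d\ge 2e$, whereas the paper verifies $\dim C_\frakg(X)\le\dim G-2e$ by an explicit case analysis via Jordan normal form; these are equivalent, and the resulting bound $\dim\rho\gtrsim|\kp|^{2e\lfloor r/2\rfloor}$ and the final numerics coincide.
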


\subsection{The prime case}
Fix a prime ideal $\frakp\subset\calO_F$ not dividing $\frakd_1$ and
let $\kp=\calO_F/\frakp$ denote the residue field.
The group scheme $\GG_\kp$ is then an algebraic group defined over $\kp$
and $\GG(\calO_F/\frakp)=\GG(\kp)$ is its set of $\kp$-rational points,
which is a quasisimple group of Lie type.  The smallest dimension of an
irreducible representation of such groups was studied in
\cite{Landazuri72,LandazuriSeitz74,SeitzZalesskii93}.  We quote here
the bounds in \cite{SeitzZalesskii93} that are relevant to our case.

\begin{itemize}
\item When $G \cong \SO(n,1)$ and $n$ is even,
$\GG(\kp)\cong\SO_{n+1}(\kp)$ is the split orthogonal group of that rank
over $\kp$.  If $n$ is odd, then $\GG(\kp)\cong \SO_{n+1}^{\pm}(\kp)$
is one of the two orthogonal groups.  In these cases, the dimension of any nontrivial irreducible representation of $\GG(\kp)$
is bounded below by $c|\kp|^{n-2}$ for $n\geq 6$ and by $c|\kp|$ for $n<6$.

\item When $G$ is a unitary group, either $\GG(\kp)\cong \SU_{n+1}(\kp)$ or
$\GG(\kp)\cong \SL_{n+1}$ depending on whether $\frakp$ is inert or split
in a corresponding quadratic extension of $F$ (we include the finite set
of ramified primes in the idea $\frakd$).  In these cases, the dimension
of any nontrivial irreducible representation of $\GG(\kp)$ is bounded below
by $c|\kp|^{n}$.
\end{itemize}

Consequently, we get that in all cases the representations of prime level
$\frakp$ satisfy $\dim(\rho)\gtrsim |\kp|^e$, establishing Proposition
\ref{p:irrep2} for prime ideals.

\subsection{The prime power case}
We fix a prime ideal $\frakp$ not dividing $\frakd_1$ and an integer $r>1$;
write $k=[\frac{r}{2}]$ for the integer part of $r/2$.
We will also assume from here on that the characteristic, $p$, of $\kp$ does not ramify in $F$ and that it is not
\emph{bad} for $G$  (see \cite[I \S 4]{SpringerSteinberg70} for details). For this we only need to exclude a finite number
of primes, which we include in the ideal $\frakd$.
Under these assumptions, we will show that all irreducible representations
of level $\frakp^r$ satisfy
\begin{equation}\label{e:irrep2}
\dim(\rho)\gtrsim |\kp|^{2ke}=\left\lbrace\begin{array}{cc} |\kp|^{re} & r=2k\\ |\kp|^{(r-1)e} & r=2k+1\end{array}\right.
\end{equation}

Consider the finite local ring $\calO=\OF/\frakp^r$.
Slightly abusing notation, we will denote by $\frakp=\frakp\calO$
the maximal ideal of $\calO$.  Since $\frakp$ is prime to $\frakd_1$,
$\calO$ is a quotient of $(\OF)_{\frakd_1}$ and we may replace $\GG$
with $\GG_\calO$ (which we denote $\GG$ from now on).
For any $\tfrac{r}{2} \leq l\leq r$ denote by $\GG(\frakp^l)$ the
kernel of the projection from $\GG(\calO)$ to $\GG(\calO/\frakp^l)$.
Then $\GG(\frakp^l)$ is commutative.  Moreover, if $\pi\in \calO$ is
a uniformizer for $\frakp$, then the map $I+\pi^lX \mapsto X$ is an
isomorphism of $\GG(\frakp^l)$ with $\frakg(\calO/\frakp^{k-l})$ where
$\frakg=\Lie(\GG)$ (cf.\ \cite[Lemma 5.2]{Weisfeiler84}).

For any $X\in \frakg(\calO)$ denote by $C_{G(\calO/\frakp^l)}(X)$
and $C_{\frakg(\calO/\frakp^l)}(X)$ the centralizers of $X$ in
$G(\calO/\frakp^l)$ and $\frakg(\calO/\frakp^l)$ respectively.
In particular, for $l=1$ we have,
\[C_{G(\kp)}(X)=\{g\in G(k)| gX=Xg\},\quad C_{\frakg(\kp)}(X)=\{Y\in \frakg(\kp)| YX=XY\}.\]
\begin{prop}\label{p:irrep3}
For any representation $\rho$ of $\GG(\OF/\frakp^r)=\GG(\calO)$
of level $\frakp^r$ there is a nontrivial $X\in \frakg(\kp)$ such that
$$\dim(\rho)\geq \frac{|\GG(\kp)||\frakg(\kp)|^{k-1}}{|C_{\GG(\kp)}(X)||C_{\frakg(\kp)}(X)|^{k-1}}.$$
\end{prop}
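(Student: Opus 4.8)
The plan is to use Clifford theory for the chain of normal subgroups $\GG(\frakp^k)\lhd\GG(\calO)$ together with the commutativity and concrete description of $\GG(\frakp^k)\cong\frakg(\calO/\frakp^{r-k})$. First I would let $\rho$ be an irreducible representation of $\GG(\calO)$ of level $\frakp^r$. Restricting $\rho$ to the abelian normal subgroup $H:=\GG(\frakp^k)$, Clifford's theorem says $\rho|_H$ is a sum of $\GG(\calO)$-conjugates of a single character $\chi$ of $H$, and $\dim\rho$ is the index $[\GG(\calO):\mathrm{Stab}(\chi)]$ times $\dim$ of an irreducible of $\mathrm{Stab}(\chi)/\ker$; in particular $\dim\rho\geq[\GG(\calO):\mathrm{Stab}_{\GG(\calO)}(\chi)]$, the size of the conjugacy orbit of $\chi$. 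The key point is that because $\rho$ is \emph{new} of level exactly $\frakp^r$, the character $\chi$ cannot be trivial on $\GG(\frakp^{r-1})$, so the ``linear part'' of $\chi$ — i.e.\ the element of $\frakg(\kp)^\vee$ obtained by restricting $\chi$ to $\GG(\frakp^{r-1})\cong\frakg(\kp)$ — is a nontrivial functional. Using the (non-bad characteristic, unramified) hypothesis, the Killing form identifies $\frakg(\kp)^\vee$ with $\frakg(\kp)$ $\GG(\kp)$-equivariantly, so this linear part corresponds to a nontrivial $X\in\frakg(\kp)$.

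Next I would compute the orbit size. The conjugation action of $\GG(\calO)$ on characters of $H=\GG(\frakp^k)$ factors through $\GG(\calO/\frakp^k)$, and on the ``top layer'' $\GG(\frakp^{r-1})\cong\frakg(\kp)$ it is exactly the adjoint action of $\GG(\kp)$ on $\frakg(\kp)^\vee\cong\frakg(\kp)$ (lower layers contribute extra factors of $\frakg(\kp)$). Hence the stabilizer of $\chi$ in $\GG(\calO)$ projects into, on each of the $k$ layers $\frakp^{k},\dots,\frakp^{r-1}$ (there are $k$ such layers when $r=2k$ and $k$ when $r=2k+1$; in general $r-k$ layers of which the action on the first is via $\mathrm{Ad}$), the centralizer $C_{\GG(\kp)}(X)$ on the top layer and into $C_{\frakg(\kp)}(X)$ on the remaining $k-1$ layers. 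Counting, the orbit of $\chi$ therefore has size at least
\[
\frac{|\GG(\kp)|}{|C_{\GG(\kp)}(X)|}\cdot\left(\frac{|\frakg(\kp)|}{|C_{\frakg(\kp)}(X)|}\right)^{k-1},
\]
which is precisely the claimed lower bound for $\dim\rho$. The bookkeeping of how the stabilizer at the group level maps to the product of centralizers across the $k$ layers $\calO/\frakp^{k}\to\cdots$ is the step requiring care: one must check that an element of $\GG(\calO)$ fixing $\chi$ must, layer by layer, centralize the associated symbol, and that the induced data on successive quotients is governed by $C_\frakg(X)$ once one has passed the top layer. This is a standard ``Heisenberg/Howe'' type argument for compact $p$-adic groups mod $\frakp^r$, but it is where all the real content sits.

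The main obstacle I anticipate is making precise the identification of the character $\chi$ on the abelian group $\GG(\frakp^k)$ with an element of $\frakg(\calO/\frakp^{r-k})^\vee$ and tracking the $\GG(\calO)$-action through the filtration, particularly when $r$ is odd so that $\GG(\frakp^k)$ is abelian but its quotient by $\GG(\frakp^{k+1})$ is not central in $\GG(\calO/\frakp^{k+1})$ — here one needs the extra factor to land correctly and may need to first pass to a maximal subgroup on which $\chi$ extends (the second term in the Clifford decomposition), invoking that the resulting projective representation issues do not decrease the dimension. I would handle this by choosing $l=k$ exactly (so $\GG(\frakp^k)$ abelian by the cited \cite[Lemma 5.2]{Weisfeiler84}) and then, if $r=2k+1$, noting that $\GG(\frakp^k)/\GG(\frakp^{k+1})$ together with $\GG(\frakp^{k+1})$ forms a two-step nilpotent group whose irreducibles over a fixed central character $\chi$ have dimension $\geq|\frakg(\kp)|^{1/2}$ only in degenerate situations — but since we only need the orbit-size bound and not this Heisenberg dimension, it suffices to bound $\dim\rho$ below by $[\GG(\calO):\mathrm{Stab}(\chi)]$ and compute that index as above, using the adjoint-orbit description on the top layer and the centralizer-of-$X$ description on the $k-1$ layers below it. Finally, the nontriviality of $X$ comes for free from $\rho$ being new of level exactly $\frakp^r$.
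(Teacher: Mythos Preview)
Your Clifford-theory strategy is exactly the paper's, but two technical points are off.

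First, the abelian normal subgroup. You take $H=\GG(\frakp^k)$ with $k=\lfloor r/2\rfloor$ and assert it is abelian by \cite[Lemma~5.2]{Weisfeiler84}. That lemma (and the paper's setup) only gives commutativity of $\GG(\frakp^l)$ for $l\geq r/2$; when $r=2k+1$ you have $k<r/2$, and indeed $(I+\pi^kX)(I+\pi^kY)$ and its transpose differ by a term in $\frakp^{2k}=\frakp^{r-1}\neq 0$, so $\GG(\frakp^k)$ is only two--step nilpotent. The paper instead uses $\GG(\frakp^{r-k})$, which is abelian for both parities of $r$, is isomorphic to $\frakg(\calO/\frakp^k)$, and on which the conjugation action of $\GG(\calO)$ still factors through $\GG(\calO/\frakp^k)$ (because $\GG(\frakp^k)$ acts trivially on $\GG(\frakp^{r-k})$). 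This single change removes all the odd-$r$ contortions you anticipate.

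Second, the layer-by-layer stabilizer bound. You identify only the top-layer datum $X\in\frakg(\kp)$ (the restriction of $\chi$ to $\GG(\frakp^{r-1})$) and then claim the stabilizer projects into $C_{\frakg(\kp)}(X)$ on each of the $k-1$ lower layers. But an element $I+\pi^{k-1}Y$ acts on $\GG(\frakp^{r-1})$ by a commutator landing in $\frakp^{r+k-2}\subset\frakp^r$ (once $k\geq 2$), hence fixes the top-layer restriction for \emph{every} $Y$; so knowing only $X\bmod\frakp$ does not force $Y\in C_{\frakg(\kp)}(X)$. The paper fixes this by identifying the \emph{whole} dual $\widehat{\GG(\frakp^{r-k})}\cong\frakg(\calO/\frakp^k)$ via the Killing form over $\calO/\frakp^k$ (nondegenerate by an easy induction) together with the trace $\calO/\frakp^k\to\bbZ/p^k\bbZ$. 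Then $\chi=\chi_X$ for a full lift $X\in\frakg(\calO/\frakp^k)$ with $X\not\equiv 0\pmod\frakp$, and $\mathrm{Stab}(\chi)=C_{\GG(\calO/\frakp^k)}(X)$ on the nose. Now if $g_1,g_2$ lie in this centralizer and agree mod $\frakp^{k-1}$, then $g_1g_2^{-1}=I+\pi^{k-1}Y$ commutes with $X$ in $\calO/\frakp^k$, which is exactly $[Y,X]\equiv 0\pmod\frakp$, i.e.\ $Y\in C_{\frakg(\kp)}(\bar X)$. Induction gives $|C_{\GG(\calO/\frakp^k)}(X)|\leq |C_{\GG(\kp)}(\bar X)|\,|C_{\frakg(\kp)}(\bar X)|^{k-1}$, which is the bound you want.
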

\begin{proof}
The group $\GG(\frakp^{r-k})$ is a commutative normal subgroup of
$\GG(\calO)$ and the adjoint action of
$\GG(\calO/\frakp^k)=\GG(\calO)/\GG(\frakp^{k})$ on $\GG(\frakp^{r-k})$
induces a corresponding co-adjoint action on its dual $\widehat{\GG(\frakp^{r-k})}$.

Let $\rho$ denote a new representation of $\GG(\calO)$.
The restriction of $\rho$ to $\GG(\frakp^{r-k})$ is thus supported on a
$\GG(\calO/\frakp^k)$-invariant set in $\widehat{\GG(\frakp^{r-k})}$.
In this parametrization, that $\rho$ is new is equivalent to the
restriction of $\rho$ to $\GG(\frakp^{r-1})$ being non-trivial.  It follows
that there is at least one character $\chi\in \widehat{\GG(\frakp^{r-k})}$
in the support that is not trivial on $\GG(\frakp^{r-1})$, and
$$\dim\rho\geq \#\{\chi^g|g\in \GG(\calO/\frakp^k)\}=\frac{|\GG(\calO/\frakp^k)|}{\#\{g\in \GG(\calO/\frakp^k)|\chi^g=\chi\}}.$$

We construct an isomorphism of $\frakg(\calO/\frakp^k)$ with $\widehat{\frakg(\calO/\frakp^k)}=\widehat{\GG(\frakp^{r-k})}$ that is compatible with the co-adjoint action.
Let $B(X,Y)=\Tr(\rm{ad}(X)\rm{ad}(Y))$ denote the Killing form on
$\frakg(\calO/\frakp^k)$. On $\frakg(\calO/\frakp)=\frakg(\kp)$, the Killing form is non-degenerate and invariant under
the adjoint action. By induction, this is also true on $\frakg(\calO/\frakp^k)$.
Indeed, if there is $X\in \frakg(\calO/\frakp^k)$ such that $B(X,Y)\equiv 0 \pmod{\frakp^k}$ for all $Y\in \frakg(\calO/\frakp^k)$, then in particular it is true modulo $\frakp$. Since over $\kp$ the Killing form is non-degenerate, this implies that $X\equiv 0\pmod{\frakp}$ and $X=uX'$ with $u\in \frakp$. Dividing by $u$ we get that $B(X',Y)\equiv 0 \pmod{\frakp^{k-1}}$ for all $Y\in \frakg(\calO/\frakp^{k-1})$, and by induction $X\equiv 0\pmod{\frakp^k}$.

Since we assume that the characteristic $p$ of $\kp$ is unramified in $F$, the group $\calO/\frakp^k \cong(\bbZ/p^k\bbZ)^f$ is a free $\bbZ/p^k\bbZ$
module of rank $f$ (the inertia degree).
Fixing a basis, the action of multiplication by $a\in\calO/\frakp^k$
gives an embedding $\calO/\frakp^k\hookrightarrow\Mat(f,\bbZ/p^k\bbZ)$
and we denote by $\Tr(a)\in \bbZ/p^k\bbZ$ the trace of the
corresponding matrix. The map $\Tr:\calO/\frakp^k\to\bbZ/p^k\bbZ$
is surjective and satisfies that $\Tr(ab)\equiv 0\pmod{p^k}$ for all $b\in \calO/\frakp^k$ if and only if
$a\equiv 0\pmod{\frakp^k}$. We now construct the isomorphism $\frakg(\calO/\frakp^k)\cong \widehat{\frakg(\calO/\frakp^k)}$ sending $X\in \frakg(\calO/\frakp^k)$ to the character $\chi_X\in \widehat{\frakg(\calO/\frakp^k)}$ given by
$\chi_X(Y)=\exp(\frac{2\pi i\Tr(B(X,Y))}{p^k})$.

With the above identification, the co-adjoint action of $\GG(\calO/\frakp^k)$ on $\widehat{\frakg(\calO/\frakp^k)}$ is identified with the adjoint action of $\GG$ on its Lie algebra, indeed $\chi_X^g(Y)=\chi_X(gYg^{-1})=\chi_{g^{-1}Xg}(Y)$.
Moreover, notice that $\chi_{X}$ is trivial on $\GG(\frakp^{r-1})\cong \frakg(\frakp^{k-1}/\frakp^k)$
if and only if $X\equiv 0\pmod {\frakp}$.  Consequently, we get that there is $X\in \frakg(\calO/\frakp^k)$ with $X\not\equiv 0\pmod{\frakp}$
such that
$$\dim\rho\geq \frac{|\GG(\calO/\frakp^k)|}{|C_{\GG(\calO/\frakp^k)}(X)|}=\frac{|\GG(\kp)||\frakg(\kp)|^{k-1}}{|C_{\GG(\calO/\frakp^k)}(X)|}.$$

Finally, we can write
\[|C_{\GG(\calO/\frakp^k)}(X)|=\sum_{a\in C_{\GG(\calO/\frakp^{k-1})}(X)}\#\{g\in C_{\GG(\calO/\frakp^k)}(X)|g\equiv a\pmod{\frakp^{k-1}}\}.\]
If $g_1,g_2\in C_{\GG(\calO/\frakp^k)}(X)$ and
$g_1\equiv g_2\pmod{\frakp^{k-1}}$ then $g_1g_2^{-1}=I+\pi^{k-1}Y$ with
$Y\in C_{\frakg(\calO/\frakp)}(X)$.  Hence, the number of such elements
for each $a$ is bounded by $|C_{\frakg(\kp)}(X)|$ and by induction we find
that
$|C_{\GG(\calO/\frakp^k)}(X)|\leq |C_{\GG(\kp)}(X)||C_{\frakg(\kp)}(X)|^{k-1}$
concluding the proof.
\end{proof}

Since $|\GG(\kp)|\gtrsim |\kp|^{\dim(G)}$ and $|\frakg(\kp)|=|\kp|^{\dim(G)}$, the bound \eqref{e:irrep2} follows from the following estimate on the
cardinality of centralizers.
\begin{prop}\label{p:centralizer}
Let $X\in \frakg(\kp)$ denote a nontrivial element. Then,
$$|C_{\frakg(\kp)}(X)|\leq |\kp|^{\dim(G)-2e}\mbox{ and }\quad|C_{\GG(\kp)}(X)|\lesssim |\kp|^{\dim(\GG)-2e}.$$
\end{prop}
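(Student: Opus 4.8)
The plan is to deduce both inequalities from a single geometric statement about adjoint orbits over $\kb$, and then verify that statement case by case using the classification of small orbits in the classical Lie algebras. First I would record two elementary reductions. The Lie algebra centraliser $C_{\frakg(\kp)}(X)=\ker\bigl(\mathrm{ad}(X)\colon\frakg(\kp)\to\frakg(\kp)\bigr)$ is a $\kp$-linear subspace, so $|C_{\frakg(\kp)}(X)|=|\kp|^{d}$ with $d=\dim_\kp\ker\mathrm{ad}(X)$; the rank of a linear map is insensitive to extension of the base field, so $d$ equals the geometric dimension $\dim C_{\frakg(\kb)}(X)$. For the group, $C_{\GG(\kp)}(X)$ is the set of $\kp$-points of the scheme-theoretic stabiliser of $X$ for the adjoint action; these stabilisers are the fibres of the single morphism $\{(g,Y)\in\GG\times\frakg:\mathrm{Ad}(g)Y=Y\}\to\frakg$ over the fixed group scheme, so a uniform bound of the shape $|Z(\bbF_q)|=O(q^{\dim Z})$ (Lang--Weil, or a direct fibration argument) yields $|C_{\GG(\kp)}(X)|\lesssim|\kp|^{\dim C_{\GG}(X)}$ with implied constant independent of $\frakp$ and $X$, while $\dim C_{\GG}(X)\leq\dim C_{\frakg}(X)$ because the Lie algebra of the stabiliser is contained in the infinitesimal stabiliser $\{Y\in\frakg:[Y,X]=0\}=C_{\frakg}(X)$. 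Thus both inequalities follow once one shows that $\dim C_{\frakg(\kb)}(X)\leq\dim(\GG)-2e$ for every nonzero $X$.

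To establish this I would bring in the structure theory of the adjoint action, which is available because we have discarded the bad and ramified primes and are (as already used in Proposition \ref{p:irrep3}) in the range where the Killing form on $\frakg(\kp)$ is non-degenerate: then $\frakg(\kb)$ has trivial centre and the characteristic-zero picture persists (cf.\ \cite{SpringerSteinberg70}). Concretely I would use the Jordan decomposition $X=X_s+X_n$ with $[X_s,X_n]=0$, $X_s$ semisimple and $X_n$ nilpotent; the smoothness of adjoint stabilisers, which gives $\dim C_{\frakg}(X)=\dim(\GG)-\dim(\GG\cdot X)$; and the Jacobson--Morozov theorem together with the partition-type classification of nilpotent orbits in the classical Lie algebras. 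By smoothness, the bound $\dim C_{\frakg(\kb)}(X)\leq\dim(\GG)-2e$ is equivalent to the assertion that every nonzero adjoint orbit in $\frakg(\kb)$ has dimension at least $2e$. Since $C_{\frakg}(X)\subseteq C_{\frakg}(X_s)$ one has $\dim(\GG\cdot X)\geq\dim(\GG\cdot X_s)$, so the argument splits into two cases: if $X_s\neq0$ one bounds below by $2e$ the dimension of the smallest nonzero \emph{semisimple} orbit, and if $X_s=0$ (so $X=X_n$ is a nonzero nilpotent) one does the same for the smallest nonzero \emph{nilpotent} orbit.

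Finally I would carry out these two computations for the groups that occur, namely $\GG_\kp\cong\SL_{n+1}$ or $\SU_{n+1}$ (type $A_n$) and $\GG_\kp\cong\SO_{n+1}^{(\pm)}$ (types $B_\ell$ and $D_\ell$, $n\geq 2$). In type $A_n$ the smallest nonzero semisimple orbit is that of an element with eigenvalue multiplicities $(1,n)$, and the smallest nonzero nilpotent orbit is that of a single size-$2$ Jordan block; in both cases the centraliser has dimension $n^2$, so the orbit has dimension $\bigl((n+1)^2-1\bigr)-n^2=2n$, which equals $2e$. In the orthogonal cases the smallest nonzero semisimple orbit has centraliser $\mathfrak{gl}_1\times\mathfrak{so}_{n-1}$, hence dimension $\binom{n+1}{2}-1-\binom{n-1}{2}=2n-2$, while the smallest nonzero nilpotent orbit has dimension $2n-4$ when $n\geq3$ (realised by the partition $(2,2,1^{n-3})$) and dimension $2$ when $n=2$ (where $\mathfrak{so}_3\cong\mathfrak{sl}_2$); so the minimum of all nonzero orbit dimensions is $\geq 2$, and equals $2n-4=2(n-2)$ once $n\geq 6$, which in each case coincides with $2e$ as given by \eqref{e:eG}. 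This finishes the reduction and the proof.

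The step I expect to be the main obstacle is this last orbit computation: one has to pin down the minimal nonzero nilpotent and semisimple orbit dimensions in each of the three families and match them against $e(\GG)$, being careful in the small-rank orthogonal cases where the exceptional isomorphisms $\mathfrak{so}_3\cong\mathfrak{sl}_2$, $\mathfrak{so}_4\cong\mathfrak{sl}_2\oplus\mathfrak{sl}_2$ and $\mathfrak{so}_6\cong\mathfrak{sl}_4$ intervene, and one must confirm that the orbit theory over $\kb$ genuinely behaves as in characteristic zero --- precisely what is secured by excluding the bad and ramified primes and those for which the Killing form degenerates.
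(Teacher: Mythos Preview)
Your approach is essentially the paper's: both reduce to the geometric bound $\dim C_{\frakg(\kb)}(X)\leq\dim G-2e$, split via Jordan decomposition into the semisimple and nilpotent cases, and then compute the extremal centraliser in each classical family; you pass from dimension to point-count by a Lang--Weil/fibration argument where the paper instead uses Nori's bound on connected groups together with a uniform bound on $\pi_0(C_G(X))$ from \cite{SpringerSteinberg70}. One slip to flag: your claim that the smallest nonzero semisimple orbit in $\mathfrak{so}_{n+1}$ always has centraliser $\mathfrak{gl}_1\times\mathfrak{so}_{n-1}$ fails for $n=3,5$, where the element with all eigenvalues $\pm\lambda$ (i.e.\ $r_1=(n+1)/2$, $r_0=0$) has centraliser of dimension $((n+1)/2)^2$ and hence orbit of dimension $2$ or $6$ rather than $2n-2$ --- the paper catches exactly these two exceptions. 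Since $2e=2$ in that range the final inequality is unaffected, and you correctly anticipated that the low-rank orthogonal cases would need separate attention.
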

\begin{proof}
Let $G=\GG_\kp$, so that $G$ is one of the groups $\SL_{n+1},\SU_{n+1}$ or
$\SO_{n+1}^\pm$ over the finite field $\kp$, and let $\frakg=\Lie(G)$ be
its Lie algebra.  Let $\kb$ be an algebraic closure of $\kp$.
Then $G=G(\kb)$ is an algebraic group defined over $\kp$ and for any
$X\in \frakg(\kp)$, the centralizer $C_{G}(X)$ is an algebraic subgroup
of $G$ whose $\kp$ points are precisely $C_{G(\kp)}(X)$.
Moreover, by \cite[Corollary 5.2]{SpringerSteinberg70} we have that
$C_\frakg(X)$ is the Lie algebra of $C_G(X)$.

It is known that the number of $\kp$-rational points of any connected
algebraic group $H$ defined over $\kp$ is roughly $|\kp|^{\dim H}$ --
a precise bound is
$$(|\kp|-1)^{\dim(H)}\leq |H(\kp)|\leq (|\kp|+1)^{\dim{H}},$$
(cf. \cite[Lemma 3.5]{Nori87}).
The group $C_G(X)$ is not necessarily connected.  However, by
\cite[Chapter II, 4.1 and 4.2]{SpringerSteinberg70}) and
\cite[Chapter IV, 2.26]{SpringerSteinberg70} its number of connected
components is uniformly bounded.  Consequently, to bound the size of the
centralizer it is sufficient to bound the dimension, that is, to show
\[\dim(C_\frakg(X))\leq \dim(G)-2e.\]
We will show that this holds for any $X\in\frakg(\kb)$; the statement
is now purely about the algebraic group $G$ defined over an
algebraically closed field.

Recall the Jordan decomposition in the Lie algebra; for any $X\in \frakg$
there is a unique decomposition $X=X_s+X_n$ with $X_s$ semi-simple and
$X_n$ nilpotent which commute with each other.  Since this decomposition is
unique, $C_G(X)=C_G(X_s)\cap C_G(X_n)$.  Consequently, we may assume that
$X$ is either semisimple or nilpotent.

We start with $G=\SL_{n+1}$ (corresponding to $\GG(\bbR)$ being a unitary
group) so that $\dim G=n^2+2n$ and $e(G)=n$.
Let $X\in \frakg$ denote a non-trivial semi-simple element.
We may assume that $X$ lies in the Lie algebra of the standard split torus,
in which case it is easy to determine the centralizer explicitly. A simple computation shows that if
$r_1,\ldots, r_d$ are the multiplicities of the $d$ distinct eigenvalues
of $X$, so that $r_1+\ldots +r_d=n+1$, then
\[\dim C_\frakg(X)=\sum_{j=1}^d r_d^2-1.\]
This is clearly maximal when there are only two eigenvalues of
multiplicities $r_1=1$ and $r_2=n$ respectively, so that
\[\dim C_\frakg(X)\leq n^2\leq\dim{G}-2e.\]

Next, let $X\in \frakg$ be a nontrivial nilpotent element
and let $r_j,\;j=1,\ldots,d$ denote the number of blocks of size $j$
in its Jordan normal form.  Then $\sum_j jr_j=n+1$ and
\[\dim C_\frakg(X)=\sum_{j=1}^d(r_j+\ldots+ r_d)^2-1,\]
(see \cite[Chapter IV, 1.8]{SpringerSteinberg70}).
This is maximal when $r_1=n-1$ and $r_2=1$ so that
\[\dim C_\frakg(X)\leq n^2-1\leq\dim(G)-2e.\]

We consider the orthogonal group, $G=\SO_{n+1}$ with its canonical inclusion
in $\SL_{n+1}$.  We have $\dim(G)=\frac{n(n+1)}{2}$ as well as $e=(n-2)$
for $n\geq 6$ and $e=1$ for $n<6$.
Let $X\in\frakg$ be a semisimple element.  Since $X^t=-X$,
the nonzero eigenvalues of $X$ come in pairs $\lambda,-\lambda\in \kb^\times$
each having the same multiplicity.  Let $r_0=\dim\ker(X)$ and let
$r_1,\ldots,r_d$ denote the multiplicities of distinct pairs of nonzero
eigenvalues, so that $r_0+2(r_1+\ldots+r_d)=n+1$.  For such an element,
we can compute the centralizer explicitly, and its dimension is given by
\[\dim C_\frakg(X)=\frac{r_0(r_0-1)}{2}+\sum_{j=1}^d r_j^2.\]
Clearly, this is maximal if there is only one pair of nonzero eigenvalues
(with multiplicity $r_1$) and the zero eigenvalue with multiplicity
$r_0=n+1-2r_1$, in which case the dimension is
$$\dim C_\frakg(X)=\frac{(n+1)n}{2}+r_1(3r_1-2n-1).$$
For $n\neq 3,5$ this is maximal when $r_1=1$ and
$$\dim C_\frakg(X)\leq \frac{(n+1)n}{2}-2(n-1)\leq \dim(G)-2e.$$
When $n=3,5$ the maximal dimension is obtained when $r_1=\frac{n+1}{2}$
and it is $4$ and $9$ respectively, in particular it is bounded by $\dim(G)-2$.

Finally, for a non-trivial nilpotent element $X\in\frakg(\kb)$,
again write it in its Jordan normal form
(see \cite[IV, \S 2.19]{SpringerSteinberg70} for the normal form in the
orthogonal group). Denote by $r_j$ the number of blocks of size $j$,
then $\sum jr_j=n+1$ and $r_j$ is even for even $j$. With this data we have
\[\dim C_\frakg(X)=\frac{1}{2}\sum_j(r_j+\ldots+r_d)^2-\frac{1}{2}\sum_{j \mbox{ odd}}r_j.\]
It is not hard to see that this is maximal when $r_1=n-2$ and $r_3=1$ so that
\[\dim C_\frakg(X)\leq \frac{n(n+1)}{2}-2(n-1)\leq\dim(G)-2e. \]
\end{proof}

To conclude, Propositions \ref{p:irrep3} and \ref{p:centralizer}
give the bound \eqref{e:irrep2}.
We thus get that a representation $\rho$ of level $\frakp^r$ satisfies
$\dim(\rho) \gtrsim |\kp|^{er}$ for even $r$, and
$\dim(\rho) \gtrsim |\kp|^{e(r-1)}\geq |\kp|^{2re/3}$ for odd $r\geq 3$.
Together with the prime case discussed above, this concludes the proof of Proposition \ref{p:irrep2}, and hence of Theorem \ref{t:multiplicity2}.

----------------------------------------------------------------

\def\cprime{$'$} \def\cprime{$'$}
\providecommand{\bysame}{\leavevmode\hbox to3em{\hrulefill}\thinspace}
\providecommand{\MR}{\relax\ifhmode\unskip\space\fi MR }
\providecommand{\MRhref}[2]{%
  \href{http://www.ams.org/mathscinet-getitem?mr=#1}{#2}
}
\providecommand{\href}[2]{#2}

\end{document}